\numberwithin{equation}{section}
\newtheorem{thm}{Theorem}[section]
\newtheorem{cor}[thm]{Corollary}
\newtheorem{lem}[thm]{Lemma}
\newtheorem{pro}[thm]{Proposition}
\theoremstyle{remark}
\newtheorem{rem}[thm]{Remark}
\theoremstyle{definition}
\newtheorem{exa}[thm]{Example}
\DeclareMathOperator{\lin}{\mbox{{\sc Lin}}}
\DeclareMathOperator{\dess}{{\mathsf{Des}}}
\DeclareMathOperator{\dzii}{{\mathsf{Chi}}}
\DeclareMathOperator{\koo}{{\mathsf{root}}}
\DeclareMathOperator{\paa}{{\mathsf{par}}}
\newcommand*{\borel}[1]{{\mathfrak B}(#1)}
\newcommand*{\cbb}{\mathbb C}
\newcommand*{\cfr}{\mathfrak c}
\newcommand*{\D}{\mathrm d}
\newcommand*{\des}[1]{{\dess(#1)}}
\newcommand*{\dz}[1]{{\EuScript D}(#1)}
\newcommand*{\dzi}[1]{\dzii(#1)}
\newcommand*{\dzie}[1]{\dzii_{\mathrm{\,e}}(#1)}
\newcommand*{\dzip}[1]{\dzii^{\prime}(#1)}
\newcommand*{\dzin}[2]{\dzii^{\langle#1\rangle}(#2)}
\newcommand*{\escr}{{\mathscr{E}_V}}
\newcommand*{\ff}{\mathcal F}
\newcommand*{\Ge}{\geqslant}
\newcommand*{\hh}{\mathcal H}
\newcommand*{\hscr}{{\mathscr H}}
\newcommand*{\is}[2]{\langle#1,#2\rangle}
\newcommand*{\jd}[1]{\EuScript N(#1)}
\newcommand*{\kk}{\mathcal K}
\newcommand*{\lambdab}{{\boldsymbol\lambda}}
\newcommand*{\Le}{\leqslant}
\newcommand*{\nbb}{\mathbb N}
\newcommand*{\ogr}[1]{\boldsymbol B(#1)}
\newcommand*{\ob}[1]{{\mathcal R}(#1)}
\newcommand*{\pa}[1]{\paa(#1)}
\newcommand*{\rbb}{\mathbb R}
\newcommand*{\slam}{S_{\boldsymbol \lambda}}
\newcommand*{\tcal}{{\mathscr T}}
\newcommand*{\zbb}{\mathbb Z}
\begin{document}
   \title[Operators with absolute continuity properties]
{Operators with absolute continuity properties:\ \\ an
application to quasinormality}
   \author[Z.\ J.\ Jab{\l}o\'nski]{Zenon Jan Jab{\l}o\'nski}
\address{Instytut Matematyki, Uniwersytet Jagiello\'nski,
ul.\ \L ojasiewicza 6, PL-30348 Kra\-k\'ow, Poland}
   \email{Zenon.Jablonski@im.uj.edu.pl}
   \author[I.\ B.\ Jung]{Il Bong Jung}
   \address{Department of Mathematics, Kyungpook National
University, Daegu 702-701, Korea}
   \email{ibjung@knu.ac.kr}
   \author[J.\ Stochel]{Jan Stochel}
\address{Instytut Matematyki, Uniwersytet Jagiello\'nski,
ul.\ \L ojasiewicza 6, PL-30348 Kra\-k\'ow, Poland}
   \email{Jan.Stochel@im.uj.edu.pl}
   \thanks{Research of the first
and the third authors was supported by the MNiSzW
(Ministry of Science and Higher Education) grant NN201
546438 (2010-2013). The second author was supported by
Basic Science Research Program through the National
Research Foundation of Korea (NRF) grant funded by the
Korea government (MEST) (2009-0093125).}
    \subjclass[2010]{Primary 47B20; Secondary 47B37}

   \keywords{Quasinormal operator, spectral measure,
absolute continuity, weighted shift on a directed
tree, $q$-quasinormal operator}
   \begin{abstract}
An absolute continuity approach to quasinormality
which relates the operator in question to the spectral
measure of its modulus is developed. Algebraic
characterizations of some classes of operators that
emerged in this context are invented. Various examples
and counterexamples illustrating the concepts of the
paper are constructed by means of weighted shifts on
directed trees. Generalizations of these results that
cover the case of $q$-quasinormal operators are
established.
   \end{abstract}
   \maketitle

   \section{Introduction}
The notion of a quasinormal operator, i.e., an
operator with commuting factors in its polar
decomposition, has been introduced by Arlen Brown in
\cite{bro} (the unbounded case has been undertaken in
\cite{StSz1}). Such operators form a bridge between
normal and subnormal operators. Quasinormal operators
have been found to be useful in many constructions of
operator theory, e.g., when dealing with the question
of subnormality (see \cite{Em,lam0,StSz1} for the
general case and \cite{lam1} for the case of
composition operators).

In the present paper we develop an absolute continuity
approach to quasinormality of unbounded operators. On
the way we characterize wider classes of operators
that seem to be of independent interest. First we
prove that a closed densely defined Hilbert space
operator $A$ is quasinormal if and only if
$\is{E(\cdot)Af}{Af} \ll \is{E(\cdot)|A|f}{|A|f}$ for
every vector $f$ in the domain $\dz{A}$ of $A$ (the
symbol $\ll$ means absolute continuity), where $E$ is
the spectral measure of the modulus $|A|$ of $A$ (see
Theorem \ref{chq2}). One may ask whether reversing the
above absolute continuity implies the quasinormality
of $A$. In general the answer is in the negative (cf.\
Examples \ref{nq-ineq}, \ref{nq-ineq2} and
\ref{nq-ineq3}). Another question which arises is:\
assuming more, namely that the Radon-Nikodym
derivative of $\is{E(\cdot)|A|f}{|A|f}$ with respect
to $\is{E(\cdot)Af}{Af}$ is bounded by a constant $c$
which does not depend on $f \in \dz{A}$, is it true
that $A$ is quasinormal? We shall prove that the
answer is in the affirmative for $c \Le 1$ and in the
negative for $c> 1$ (cf.\ Theorem \ref{quasiwkwkw2}
and Section \ref{Exam}). The case of $c>1$ leads to a
new class of operators, called weakly quasinormal,
which are characterized by means of strong commutant
of their moduli (cf.\ Theorem \ref{chq3}). Let us
remark that operators which satisfy the reversed
absolute continuity condition can be completely
characterized in the language of operator theory (cf.\
Theorem \ref{abcon}).

The absolute continuity approach invented in this
paper is implemented in the context of weighted shifts
on directed trees, cf.\ Theorem \ref{wsquasineq} (the
concept of a weighted shift on a directed tree has
been developed in \cite{j-j-s}). This enables us to
illustrate the theme of this article by various
examples and to show that there is no relationship
between the hyponormality class and the classes of
operators being considered in this paper (cf.\ Section
\ref{Exam}). Finally, the last section provides some
generalizations of our main results. In particular,
they cover the case of the so-called $q$-quasinormal
operators.
   \section{Notation and terminology} In what follows,
$\cbb$ stands for the set of all complex numbers. We
denote by $\zbb_+$, $\nbb$ and $\rbb_+$ the sets of
nonnegative integers, positive integers and
nonnegative real numbers, respectively. The symbol
$\borel{\rbb_+}$ stands for the $\sigma$-algebra of
all Borel subsets of $\rbb_+$. Given two finite
positive Borel measures $\mu$ and $\nu$ on $\rbb_+$,
we write $\mu \ll \nu$ if $\mu$ is absolutely
continuous with respect to $\nu$; if this is the case,
then $\D\mu/\D\nu$ stands for the Radon-Nikodym
derivative of $\mu$ with respect to $\nu$. We denote
by $\chi_Y$ the characteristic function of a set $Y$.
The symbol $\bigsqcup$ is reserved to denote pairwise
disjoint union of sets.

Let $A$ be an operator in a complex Hilbert space
$\hh$ (all operators considered in this paper are
assumed to be linear). Denote by $\dz{A}$, $\ob{A}$,
$A^*$ and $\bar A$ the domain, the range, the adjoint
and the closure of $A$ (in case they exist). If $A$ is
closed and densely defined, then $|A|$ stands for the
square root of the positive selfadjoint operator
$A^*A$ (for this and other necessary facts concerning
unbounded operators we refer the reader to
\cite{b-s,weid}). For two operators $S$ and $T$ in
$\hh$, we write $S\subseteq T$ if $\dz{S} \subseteq
\dz{T}$ and $Sf=Tf$ for all $f \in \dz{S}$. The
$C^*$-algebra of all bounded operators $A$ in $\hh$
such that $\dz{A}=\hh$ is denoted by $\ogr \hh$. The
symbol $I_{\hh}$ stands for the identity operator on
$\hh$. We write $\lin \ff$ for the linear span of a
subset $\ff$ of $\hh$.

We now recall a description of the strong commutant of
a normal operator.
   \begin{thm}[\mbox{\cite[Theorem 6.6.3]{b-s}}] \label{bir}
Let $A$ be a normal operator in $\hh$, i.e., $A$ is
closed densely defined and $A^*A=AA^*$. If $T\in
\ogr{\hh}$, then $TA \subseteq AT$ if and only if $T$
commutes with the spectral measure of $A$.
   \end{thm}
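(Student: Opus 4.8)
The plan is to establish the two implications separately. The forward implication---that commuting with the spectral measure $E$ of $A$ forces $TA\subseteq AT$---is routine, whereas the converse is the substance of the statement and rests on a Fuglede--Putnam type argument.

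For the routine direction, assume $TE(\omega)=E(\omega)T$ for every Borel $\omega\subseteq\cbb$. By linearity and approximation this gives $T\big(\int\phi\,\D E\big)=\big(\int\phi\,\D E\big)T$ for every bounded Borel function $\phi$, in particular for the truncations $A_n:=\int_{\{|z|\Le n\}}z\,\D E(z)\in\ogr{\hh}$, so that $TA_n=A_nT$. Fixing $f\in\dz{A}$ one has $A_nf\to Af$, whence $A_n(Tf)=T(A_nf)\to T(Af)$; since $\|A_ng\|^2=\int_{\{|z|\Le n\}}|z|^2\,\D\is{E(z)g}{g}$ increases to $\int_{\cbb}|z|^2\,\D\is{E(z)g}{g}$, the boundedness of $\{\|A_n(Tf)\|\}_n$ forces $Tf\in\dz{A}$ and $A_n(Tf)\to A(Tf)$. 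Comparing the two limits yields $A(Tf)=T(Af)$, i.e.\ $TA\subseteq AT$.

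For the converse, assume $TA\subseteq AT$. Taking adjoints (legitimate since $T\in\ogr{\hh}$) gives $T^*A^*\subseteq A^*T^*$, while $TA\subseteq AT$ itself yields $T\dz{A}\subseteq\dz{A}$, so by normality ($\dz{A}=\dz{A^*}$) also $T\dz{A^*}\subseteq\dz{A^*}$. The crucial---and only genuinely hard---step is to promote this to commutation with $A^*$ itself, namely $TA^*\subseteq A^*T$; this is exactly the Fuglede--Putnam phenomenon, which I would invoke (or prove by the classical Liouville argument after passing to bounded normal operators). Granting it, a short computation on $\dz{A^*A}$ shows that $T$ commutes with the self-adjoint operator $A^*A$, hence with the bounded operator $B:=(I_{\hh}+A^*A)^{-1}\in\ogr{\hh}$, and consequently with the bounded normal operators $AB=\int_{\cbb}\tfrac{z}{1+|z|^2}\,\D E(z)$ and its adjoint $A^*B$.

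Finally I would identify the generated algebra. The commuting bounded normal operators $B$ and $AB$ are images of $A$ under the Borel map $z\mapsto\big(\tfrac{1}{1+|z|^2},\tfrac{z}{1+|z|^2}\big)$, which is injective on $\cbb$; hence the von Neumann algebra generated by $B$, $AB$, $A^*B$ coincides with that generated by $A$ and contains every spectral projection $E(\omega)$. Since $T$ commutes with $B$, $AB$ and $A^*B$, it lies in the commutant of this algebra, so $TE(\omega)=E(\omega)T$ for all Borel $\omega\subseteq\cbb$. The main obstacle is thus the passage from $TA\subseteq AT$ to $TA^*\subseteq A^*T$: the purely formal manipulations deliver only the adjoint relation $T^*A^*\subseteq A^*T^*$, and breaking this symmetry requires the Fuglede--Putnam theorem rather than algebra alone. (When $A$ is self-adjoint---the case relevant to the modulus $|A|$ in the applications---the resolvent $(A-iI_{\hh})^{-1}$ is available and one may reduce directly to the bounded Fuglede theorem, bypassing $B$.)
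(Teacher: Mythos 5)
You should first note that the paper itself offers no proof of this statement: it is quoted from Birman--Solomjak \cite[Theorem 6.6.3]{b-s}, so your proposal can only be measured against the standard literature, not against an argument in the paper. Within that frame, your argument is essentially correct and modular. The forward direction (truncations $A_n$, monotone convergence forcing $Tf\in\dz{A}$) is complete. In the converse direction, granting the unbounded Fuglede--Putnam theorem in the intertwining form $TA\subseteq AT\Rightarrow TA^{*}\subseteq A^{*}T$, your chain of deductions is valid: $TA^{*}A\subseteq A^{*}AT$ on $\dz{A^{*}A}$; hence $TB=BT$ for $B=(I_{\hh}+A^{*}A)^{-1}$ (solve $g=(I_{\hh}+A^{*}A)f$); hence $T$ commutes with the bounded normal operators $AB$ and $A^{*}B=(AB)^{*}$; and the injectivity of $z\mapsto\big(\tfrac{1}{1+|z|^{2}},\tfrac{z}{1+|z|^{2}}\big)$ does recover each $E(\omega)$ as a joint spectral projection of the commuting pair $(B,AB)$, so that $E(\omega)$ lies in the von Neumann algebra generated by $B$, $AB$, $A^{*}B$, whose commutant contains $T$. (The image of a Borel set under this map is Borel, e.g.\ because the map is a homeomorphism on each disc $\{|z|\Le n\}$, so no descriptive set-theoretic difficulty arises.)

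The one genuine weak point is your parenthetical plan for proving the key step. ``Passing to bounded normal operators'' is exactly what is \emph{not} available at that stage: you cannot truncate $A$ by its spectral projections (commuting with them is the conclusion being sought), and resolvents need not exist at all, since an unbounded normal operator can have spectrum equal to $\cbb$ (multiplication by $z$ on $L^{2}(\cbb)$). The standard reduction (e.g.\ Rudin, \emph{Functional Analysis}, Theorem 13.33) instead decomposes $\hh=\bigoplus_{n}E(\{n-1\Le|z|<n\})\hh$ and applies the \emph{two-operator} Putnam form of the bounded theorem to the blocks $P_{m}TP_{n}$, which intertwine the bounded normal restrictions of $A$ --- precisely because $T$ itself is not known to preserve those subspaces. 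So if you cite Fuglede--Putnam for unbounded normal operators in the adjoint-intertwining form, your proof stands and is no less self-contained than the paper's bare citation (though beware: many textbook statements of the unbounded Fuglede theorem already conclude commutation with $E$, and citing those would swallow your whole argument); but the Liouville-style repair you sketch would stall. Your closing remark is correct and worth keeping: for self-adjoint $A$ --- the only case the paper actually uses, namely $|A|$ --- the resolvent $(A-iI_{\hh})^{-1}$ exists, $T$ commutes with it by pure algebra, and the bounded Fuglede theorem finishes the job.
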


A densely defined operator $S$ in $\hh$ is said to be
{\em subnormal} if there exists a complex Hilbert
space $\kk$ and a normal operator $N$ in $\kk$ such
that $\hh \subseteq \kk$ (isometric embedding) and $Sh
= Nh$ for all $h \in \dz S$. A densely defined
operator $T$ in $\hh$ is said to be {\em hyponormal}
if $\dz{T} \subseteq \dz{T^*}$ and $\|T^*f\| \Le
\|Tf\|$ for all $f \in \dz T$. It is well-known that
subnormal operators are hyponormal, but not
conversely. Recall that subnormal (hyponormal)
operators are closable and their closures are
subnormal (hyponormal). We refer the reader to
\cite{ot-sch,jj3,sto} and
\cite{StSz0,StSz1,StSz1.5,StSz2} for elements of the
theory of unbounded hyponormal and subnormal
operators, respectively.
   \section{An absolute continuity approach to quasinormality}
Following \cite{StSz1} (see also \cite{bro} for the
case of bounded operators), we say that a closed
densely defined operator $A$ in a complex Hilbert
space $\hh$ is {\em quasinormal} if $A$ {\em commutes}
with the spectral measure $E$ of $|A|$, i.e.,
$E(\sigma)A \subseteq A E(\sigma)$ for all $\sigma \in
\borel{\rbb_+}$. The ensuing fact is well-known (use
\cite[Proposition 1]{StSz1} and Theorem~ \ref{bir}).
   \begin{align} \label{quas}
   \begin{minipage}{70ex}
A closed densely defined operator $A$ in $\hh$ is
quasinormal if and only if $U |A| \subseteq |A|U$,
where $A=U|A|$ is the polar decomposition of $A$.
   \end{minipage}
   \end{align}
Note that quasinormal operators are hyponormal
(indeed, since $A \subseteq |A|U$, we get $U^*|A|
\subseteq A^*$, which implies hyponormality). In fact,
quasinormal operators are always subnormal (see
\cite[Theorem 2]{StSz1} for the general case; the
bounded case can be deduced from \cite[Theorem
1]{bro}). The reverse implication does not hold. For
more information on quasinormal operators we refer the
reader to \cite{bro,con} (bounded operators) and
\cite{StSz1,maj} (unbounded operators).

Now we show that quasinormality can be characterized
by means of absolute continuity.
   \begin{thm}\label{chq2}
Let $A$ be a closed densely defined operator in $\hh$
and $E$ be the spectral measure of $|A|$. Then the
following three conditions are equivalent\/{\em :}
   \begin{enumerate}
   \item[(i)] $A$ is quasinormal,
   \item[(ii)]
$\is{E(\sigma)Af}{Af} = \is{E(\sigma)|A|f}{|A|f}$ for all
$\sigma \in \borel{\rbb_+}$ and $f \in \dz{A}$,
   \item[(iii)]
$\is{E(\cdot)Af}{Af} \ll \is{E(\cdot)|A|f}{|A|f}$ for
every $f \in \dz{A}$.
   \end{enumerate}
   \end{thm}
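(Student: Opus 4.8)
The plan is to run the cycle (i)$\Rightarrow$(ii)$\Rightarrow$(iii)$\Rightarrow$(i). Write $A=U|A|$ for the polar decomposition, so that $U\in\ogr\hh$ is a partial isometry with initial space $\overline{\ob{|A|}}=(\ker|A|)^{\perp}$ and kernel $\ker U=\ker|A|=\ob{E(\{0\})}$, and $\|Ag\|=\||A|g\|$ for every $g\in\dz A$ because $U$ acts isometrically on $\overline{\ob{|A|}}\supseteq\ob{|A|}$. The one computation I would isolate first is that, since $E(\sigma)$ leaves $\dz{|A|}=\dz A$ invariant and commutes with $|A|$, one has for all $f\in\dz A$ and $\sigma\in\borel{\rbb_+}$
\[
\is{E(\sigma)|A|f}{|A|f}=\|E(\sigma)|A|f\|^{2}=\|AE(\sigma)f\|^{2},\qquad \is{E(\sigma)Af}{Af}=\|E(\sigma)Af\|^{2},
\]
where the middle equality uses $E(\sigma)|A|f=|A|E(\sigma)f$ together with $\|A\,E(\sigma)f\|=\||A|E(\sigma)f\|$. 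Granting this, (i)$\Rightarrow$(ii) is immediate: quasinormality gives $E(\sigma)f\in\dz A$ and $E(\sigma)Af=AE(\sigma)f$, whence $\|E(\sigma)Af\|^{2}=\|AE(\sigma)f\|^{2}$. The implication (ii)$\Rightarrow$(iii) is trivial, equal finite measures being mutually absolutely continuous.

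The substance is (iii)$\Rightarrow$(i). As the measures are finite and positive, $\is{E(\cdot)Af}{Af}\ll\is{E(\cdot)|A|f}{|A|f}$ means precisely that $E(\sigma)|A|f=0$ forces $E(\sigma)Af=0$, i.e.\ $E(\sigma)U|A|f=0$. Letting $u=|A|f$ range over $\ob{|A|}$ and replacing $\sigma$ by its complement $\tau=\rbb_+\setminus\sigma$ (so that $E(\sigma)u=0$ reads $u\in\ob{E(\tau)}$ and $E(\sigma)Uu=0$ reads $Uu\in\ob{E(\tau)}$), condition (iii) is recast as: for every $\tau\in\borel{\rbb_+}$, the operator $U$ carries $\ob{E(\tau)}\cap\ob{|A|}$ into $\ob{E(\tau)}$.

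Next I would upgrade this to commutation of $U$ with $E$. A density step shows $\overline{\ob{E(\tau)}\cap\ob{|A|}}\supseteq\ob{E(\tau\setminus\{0\})}$: for any $w\in\hh$ the vectors $E(\tau\cap[1/n,n])w$ lie in $\ob{E(\tau)}\cap\ob{|A|}$ (on $\ob{E([1/n,n])}$ the operator $|A|$ is bounded and boundedly invertible) and converge to $E(\tau\setminus\{0\})w$. Since $U$ is continuous and $\ob{E(\tau)}$ is closed, $U\,\ob{E(\tau\setminus\{0\})}\subseteq\ob{E(\tau)}$; as $U$ annihilates $\ob{E(\{0\})}=\ker|A|$, in fact $U\,\ob{E(\tau)}\subseteq\ob{E(\tau)}$, that is, $(I_{\hh}-E(\tau))UE(\tau)=0$ for every $\tau$. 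Using this identity for both $\tau$ and $\tau^{c}$ yields $UE(\tau)=E(\tau)UE(\tau)=E(\tau)U$ for all $\tau$, so $U$ commutes with the spectral measure of $|A|$. By Theorem~\ref{bir} this is equivalent to $U|A|\subseteq|A|U$, which by \eqref{quas} is exactly the quasinormality of $A$.

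The main obstacle is the implication (iii)$\Rightarrow$(i): converting the pointwise absolute continuity of the whole family $\{\is{E(\cdot)Af}{Af}\}_{f\in\dz A}$ into the single operator statement that $U$ commutes with $E$. The two delicate points are the bookkeeping at the spectral value $0$ (where $U$ has its kernel) and the density identification of $\overline{\ob{E(\tau)}\cap\ob{|A|}}$; once $U$ is shown to leave every spectral subspace $\ob{E(\tau)}$ invariant, the complementation step closing the argument is routine.
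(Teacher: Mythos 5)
Your proof is correct, and the substantial implication (iii)$\Rightarrow$(i) follows a genuinely different route from the paper's. The paper never touches the polar-decomposition partial isometry $U$ in that direction; instead it builds a commuting partial isometry from scratch: after disjointifying the Borel sets $\sigma_1,\ldots,\sigma_n$, it shows that (iii) forces the implication $\sum_i E(\sigma_i)|A|f_i=0 \implies \sum_i E(\sigma_i)Af_i=0$, so that the assignment $\sum_i E(\sigma_i)|A|f_i \mapsto \sum_i E(\sigma_i)Af_i$ defines a linear isometry $\widetilde T_0$ on $\ob{|A|}$; extending it to a partial isometry $T$ with $\jd{T}=\jd{|A|}$, the paper identifies $A=T|A|$ as the polar decomposition, checks that $T$ commutes with $E$, and concludes via Theorem \ref{bir} and \eqref{quas}. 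You instead take the given $U$ and prove directly that it commutes with $E$: you recast (iii) as the invariance statement that $U$ carries $\ob{E(\tau)}\cap\ob{|A|}$ into $\ob{E(\tau)}$, use the cutoffs $E(\tau\cap[1/n,n])w$ (where $|A|$ is boundedly invertible on $\ob{E([1/n,n])}$) to show $\ob{E(\tau\setminus\{0\})}$ lies in the closure of $\ob{E(\tau)}\cap\ob{|A|}$, dispose of the atom at $0$ via $\jd{U}=\ob{E(\{0\})}$, and finish by the standard complementation trick $UE(\tau)=E(\tau)UE(\tau)=E(\tau)U$; the two delicate points you flag (the value $0$ and the density identification) are exactly the right ones, and you handle both. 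Your argument is more geometric and avoids the disjointification bookkeeping and the well-definedness check for $\widetilde T_0$. What the paper's construction buys is reuse: the same disjointification template is invoked verbatim in the proofs of Theorems \ref{chq3} and \ref{abcon}, where the absolute continuity is reversed and there is no ready-made bounded operator like $U$ available — the analogous map there (sending $E(\sigma)Af \mapsto E(\sigma)|A|f$) can be unbounded, so it must be constructed rather than found, and your spectral-subspace invariance argument does not transfer to that setting. (Your (i)$\Rightarrow$(ii) also differs cosmetically — you use the defining inclusion $E(\sigma)A\subseteq AE(\sigma)$ and $\|Ag\|=\||A|g\|$, whereas the paper uses $UE(\cdot)=E(\cdot)U$ together with $U^*U|A|=|A|$ — but both are immediate.)
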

   \begin{proof}
   (i)$\Rightarrow$(ii) Let $A = U |A|$ be the polar
decomposition of $A$. By \eqref{quas} and Theorem
\ref{bir}, we have $U E(\cdot) = E(\cdot)U$. Since
$P:=U^*U$ is the orthogonal projection of $\hh$ onto
$\overline{\ob{|A|}}$, we see that $P|A|=|A|$.
Combining all this together, we get
   \begin{align*}
\is{E(\sigma)Af}{Af} = \is{E(\sigma)|A|f}{P|A|f} =
\is{E(\sigma)|A|f}{|A|f}, \quad \sigma \in
\borel{\rbb_+}, \, f \in \dz{A}.
   \end{align*}

   (ii)$\Rightarrow$(iii) Obvious.

   (iii)$\Rightarrow$(i) Fix finite systems $\sigma_1,
\ldots, \sigma_n \in \borel{\rbb_+}$ and $f_1, \ldots,
f_n \in \dz{A}$. Then there exist finite systems
$\sigma^{\prime}_1, \ldots, \sigma^{\prime}_m \in
\borel{\rbb_+}$ and $J_1, \ldots, J_n \subseteq \{1,
\ldots,m\}$ such that $\sigma^{\prime}_k \cap
\sigma^{\prime}_l = \varnothing$ for all $k \neq l$,
and $\sigma_i = \bigcup_{j \in J_i} \sigma^{\prime}_j$
for all $i\in \{1, \ldots, n\}$. Set $f^{\prime}_j =
\sum_{i=1}^n \chi_{J_i} (j) f_i$ for $j \in \{1,
\ldots,m\}$. It is easily seen that
   \allowdisplaybreaks
   \begin{align} \notag
\Big\|\sum_{i=1}^n E(\sigma_i)Af_i\Big\|^2 & =
\Big\|\sum_{i=1}^n \sum_{j \in J_i} E(\sigma^{\prime}_j) A
f_i\Big\|^2
   \\    \notag
&= \Big\|\sum_{i=1}^n \sum_{j=1}^m \chi_{J_i} (j)
E(\sigma^{\prime}_j) Af_i\Big\|^2
   \\    \notag
&= \Big\|\sum_{j=1}^m E(\sigma^{\prime}_j)
A\Big(\sum_{i=1}^n \chi_{J_i} (j) f_i\Big)\Big\|^2
   \\    \label{r6}
&= \sum_{j=1}^m
\is{E(\sigma^{\prime}_j)Af^{\prime}_j}{Af^{\prime}_j}.
   \end{align}
   Arguing as above, we get
   \begin{align} \label{r7}
\Big\|\sum_{i=1}^n E(\sigma_i)|A| f_i\Big\|^2 =
\sum_{j=1}^m \is{E(\sigma^{\prime}_j)|A| f^{\prime}_j}{|A|
f^{\prime}_j}.
   \end{align}
Since $E(\sigma)|A| \subseteq |A| E(\sigma)$ for all
$\sigma \in \borel{\rbb_+}$, we have
   \begin{align}    \label{oba}
\ob{|A|} = \lin\big\{E(\sigma)|A| f \colon \sigma \in
\borel{\rbb_+}, f \in \dz{A}\big\}.
   \end{align}
Combining \eqref{r6} and \eqref{r7} with (iii), we
deduce that for all finite systems $\sigma_1, \ldots,
\sigma_n \in \borel{\rbb_+}$ and $f_1, \ldots, f_n \in
\dz{A}$ the following implication holds:
   \begin{align*}
\sum_{i=1}^n E(\sigma_i)|A|f_i = 0 \implies
\sum_{i=1}^n E(\sigma_i)Af_i = 0.
   \end{align*}
This together with \eqref{oba} implies that the map
$\widetilde T_0\colon \ob{|A|} \to \hh$ given by
   \begin{align}     \label{defto}
\widetilde T_0\bigg(\sum_{i=1}^n
E(\sigma_i)|A|f_i\bigg) = \sum_{i=1}^n
E(\sigma_i)Af_i, \quad \sigma_i \in \borel{\rbb_+}, \,
f_i\in \dz{A}, n \in \nbb,
   \end{align}
is well-defined and linear. Substituting $n=1$ and
$\sigma_1=\rbb_+$ into \eqref{defto}, we see that
$\widetilde T_0|A| = A$. This yields
   \begin{align*}
\big\|\widetilde T_0(|A|f)\big\| = \big\|Af\big\| =
\big\||A|f\big\|, \quad f \in \dz{A},
   \end{align*}
which means that $\widetilde T_0$ is an isometry. Let
$T_0\colon \overline{\ob{|A|}} \to \hh$ be a unique
isometric and linear extension of $\widetilde T_0$. Define
the operator $T\in \ogr{\hh}$ by $Tf = T_0 Pf$ for $f \in
\hh$, where $P\in \ogr{\hh}$ is the orthogonal projection
of $\hh$ onto $\overline{\ob{|A|}}$. In view of
\eqref{defto}, $T$ is an extension of $T_0$ such that
   \begin{align} \label{tema1}
A & = T|A|,
   \\
TE(\sigma)|A| & = E(\sigma)A, \quad \sigma \in
\borel{\rbb_+}. \label{tema}
   \end{align}
Since $T_0$ is an isometry, we infer from the
definition of $T$ that $\jd{T} = \jd{|A|}=\jd{A}$.
This implies that $T$ is a partial isometry and $A =
T|A|$ is the polar decomposition of $A$. Since
   \begin{align*}
TE(\sigma)(|A|f) \overset{\eqref{tema}}= E(\sigma) A f
\overset{\eqref{tema1}}= E(\sigma) T(|A|f), \quad \sigma
\in \borel{\rbb_+}, \, f \in \dz{A},
   \end{align*}
we deduce that $TE(\sigma)|_{\overline{\ob{|A|}}} =
E(\sigma) T|_{\overline{\ob{|A|}}}$ for all $\sigma
\in \borel{\rbb_+}$. As $\jd{|A|}$ reduces $E$ and
$T|_{\jd{|A|}}=0$, we conclude that $T$ commutes with
the spectral measure $E$ of $|A|$. By Theorem
\ref{bir}, we have $T|A| \subseteq |A|T$, which
together with \eqref{quas} completes the proof.
   \end{proof}
   \section{\label{bbcc}A characterization of weak quasinormality}
We say that a closed densely defined operator $A$ in a
complex Hilbert space $\hh$ is {\em weakly
quasinormal} if there exists $c\in \rbb_+$ such that
   \begin{align}  \label{inc1}
\is{E(\sigma)|A|f}{|A|f} \Le c \is{E(\sigma)Af}{Af},
\quad \sigma \in \borel{\rbb_+}, \, f \in \dz{A},
   \end{align}
where $E$ is the spectral measure of $|A|$ (or
equivalently:\ for every $f\in \dz{A}$,
$\is{E(\cdot)|A|f}{|A|f} \ll \is{E(\cdot)Af}{Af}$ and
$\D \is{E(\cdot)|A|f}{|A|f}/\D \is{E(\cdot)Af}{Af} \Le
c$ almost everywhere with respect to
$\is{E(\cdot)Af}{Af}$). The smallest such $c$ will be
denoted by $\cfr_A$. It is worth mentioning that the
constant $\cfr_A$ is always greater than or equal to
$1$ whenever the operator $A$ is nonzero. As proved in
Theorem \ref{quasiwkwkw2}, a nonzero closed and
densely defined operator $A$ is quasinormal if and
only if it is weakly quasinormal with $\cfr_A=1$.

Our goal in this section is to characterize weak
quasinormality of unbounded operators. We begin with a
technical lemma.
   \begin{lem} \label{izonp}
Let $T \in\ogr{\hh}$ be a contraction whose
restriction to a closed linear subspace $\kk$ of $\hh$
is isometric. Then $T^*Tk =k$ for all $k \in \kk$.
   \end{lem}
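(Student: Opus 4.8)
The plan is to show that $T$ acts isometrically on $\kk$ in both directions, which forces $T^*T$ to fix each vector of $\kk$. The natural approach is to exploit the polarization that an isometry preserves inner products, combined with the contractivity of $T$ to squeeze the relevant quantities. First I would fix $k \in \kk$ and compute, using that $T$ is a contraction with $\|T\| \Le 1$ and that $T|_{\kk}$ is isometric, the quantity $\|T^*Tk - k\|^2$. Expanding this gives $\|T^*Tk\|^2 - 2\operatorname{Re}\is{T^*Tk}{k} + \|k\|^2$.

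For the middle term, since $k \in \kk$ and $T|_{\kk}$ is isometric, we have $\is{T^*Tk}{k} = \is{Tk}{Tk} = \|Tk\|^2 = \|k\|^2$, so this term equals $2\|k\|^2$. The remaining work is to bound $\|T^*Tk\|^2$ from above by $\|k\|^2$. Here I would use contractivity twice: $\|T^*Tk\| \Le \|Tk\| = \|k\|$, where the last equality again uses that $T$ is isometric on $\kk$. Substituting these into the expansion yields $\|T^*Tk - k\|^2 \Le \|k\|^2 - 2\|k\|^2 + \|k\|^2 = 0$, hence $T^*Tk = k$.

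I expect the only subtlety to lie in justifying $\is{T^*Tk}{k} = \|Tk\|^2$ cleanly: this is immediate from $\is{T^*Tk}{k} = \is{Tk}{Tk}$ by the definition of the adjoint, but one should note that $Tk$ need not lie in $\kk$, so the isometry property is invoked only in the forms $\|Tk\| = \|k\|$ (for $k \in \kk$) and $\|T^*\| = \|T\| \Le 1$ (contractivity on all of $\hh$). No step is a genuine obstacle; the argument is a short estimate, and the main point is simply to arrange the expansion of $\|T^*Tk - k\|^2$ so that the two inequalities $\|T^*Tk\| \Le \|k\|$ and the isometry identity combine to give a nonpositive upper bound for a manifestly nonnegative quantity.
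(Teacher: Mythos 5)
Your proof is correct, and it takes a genuinely different (more computational) route than the paper's. The paper works through the orthogonal projection $P_{\kk}$ of $\hh$ onto $\kk$: since $(T|_{\kk})^* = P_{\kk}T^*$, the isometry hypothesis gives the operator identity $P_{\kk}T^*T|_{\kk} = I_{\kk}$; contractivity then yields the squeeze $\|k\| = \|P_{\kk}T^*Tk\| \Le \|T^*Tk\| \Le \|k\|$, and the equality case of the projection inequality forces $T^*Tk \in \kk$, whence $T^*Tk = P_{\kk}T^*Tk = k$. You bypass projections entirely: the expansion of $\|T^*Tk - k\|^2$, the adjoint identity $\is{T^*Tk}{k} = \|Tk\|^2 = \|k\|^2$, and the bound $\|T^*Tk\| \Le \|T^*\|\,\|Tk\| \Le \|k\|$ together bound a manifestly nonnegative quantity by zero. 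Your argument is more elementary and self-contained: it needs only the diagonal isometry identity, no adjoint-of-a-restriction formula, and no equality-case reasoning for projections. What the paper's route buys is the isolated intermediate identity $P_{\kk}T^*T|_{\kk} = I_{\kk}$, which holds for \emph{any} bounded $T$ whose restriction to $\kk$ is isometric (contractivity enters only afterwards); this is exactly the piece recycled later in the proof of Theorem \ref{chq3} (implication (ii)$\Rightarrow$(i)), where the operator $T$ satisfies only $\|T\|\Le\sqrt{c}$ with possibly $c>1$ and the authors cite ``the proof of Lemma \ref{izonp}'' to obtain $I_{\overline{\ob{A}}} = PT^*T|_{\overline{\ob{A}}}$. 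Your proof, blending contractivity in from the start, does not isolate that reusable fragment, but as a standalone verification of the lemma it is complete.
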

   \begin{proof}
Denote by $T|_{\kk}\colon \kk \to \hh$ the restriction
of $T$ to $\kk$. Since $(T|_{\kk})^* = P_{\kk}T^*$ and
$T|_{\kk}$ is an isometry, we have $P_{\kk}T^*
T|_{\kk} = I_{\kk}$. This and $\|T\| \Le 1$ yield
   \begin{align*}
\|k\| = \|P_{\kk}T^* T k\| \Le \|T^* T k\| \Le \|k\|,
\quad k\in \kk,
   \end{align*}
which implies that $T^*Tk \in \kk$ and thus $T^*Tk =
k$ for all $k \in \kk$.
   \end{proof}
Below we show that Lemma \ref{izonp} is not true if
$T$ is not a contraction.
   \begin{exa}
Let $\kk$ be a nonzero complex Hilbert space and let
$\hh:=\kk\oplus \kk$. Take $T = \left
[\begin{smallmatrix} A & B
\\ C & D \end{smallmatrix}\right] \in
\ogr{\hh}$ with $A, B, C, D \in \ogr {\kk}$. Then
$T|_{\kk \oplus \{0\}}$ is an isometry if and only if
$A^*A + C^*C=I_{\kk}$. It is also easily seen that
$T^*T(\kk \oplus \{0\}) \subseteq \kk \oplus \{0\}$ if
and only if $B^*A + D^*C = 0$. Substituting $\kk=\cbb$
and $A=C=\frac{1}{\sqrt{2}}$, and taking $B, D \in
\cbb$ such that $B+D\neq 0$, we see that $T|_{\kk
\oplus \{0\}}$ is an isometry and $T^*T(\kk \oplus
\{0\}) \not\subseteq \kk \oplus \{0\}$. Hence, by
Lemma \ref{izonp}, $T$ is not a contraction,
independently of whether $B$ and $D$ are large or
small numbers.
   \end{exa}
   Now we are ready to characterize weak
quasinormality. We will show in Section \ref{Exam}
that there exist weakly quasinormal operators $A$ with
$\cfr_A > 1$ which are not hyponormal (and thus not
quasinormal).
   \begin{thm}\label{chq3}
Let $A$ be a closed densely defined operator in $\hh$
and let $c\in \rbb_+$. Then the following two
conditions are equivalent\/{\em :}
   \begin{enumerate}
   \item[(i)] $A$ is weakly quasinormal with $\cfr_A \Le c$,
   \item[(ii)] there exists $T \in \ogr{\hh}$ such that
   \begin{align} \label{comtn}
\text{$TA=|A|$, $T|A|\subseteq |A|T$ and $\|T\| \Le
\sqrt{c}$.}
   \end{align}
   \end{enumerate}
Moreover, the following assertions are valid{\em :}
   \begin{enumerate}
   \item[(iii)] if $A$ is weakly
quasinormal and $A\neq 0$, then $\cfr_A\Ge 1$,
   \item[(iv)] if $A$ is weakly
quasinormal, then the operator $T$ in {\em (ii)} can
be chosen so that $\ob{T}=\overline{\ob{|A|}}$ and
$\|T\|=\sqrt{\cfr_A}$,
   \item[(v)] if $T \in \ogr{\hh}$ satisfies
\eqref{comtn}, then $T|_{\overline{\ob{A}}} \colon
\overline{\ob{A}} \to \hh$ is an isometry,
$T(\overline{\ob{A}}) = \overline{\ob{|A|}}$ and the
partial isometry $U$ in the polar decomposition of $A$
takes the form $U=PT^*$, where $P \in \ogr{\hh}$ is
the orthogonal projection of $\hh$ onto
$\overline{\ob{A}}$.
   \end{enumerate}
   \end{thm}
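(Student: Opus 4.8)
The plan is to mirror the proof of Theorem~\ref{chq2}, but with the roles of $A$ and $|A|$ interchanged and with ``isometry'' replaced by a mere norm bound. For (i)$\Rightarrow$(ii) I would first reuse the disjointification from \eqref{r6}--\eqref{r7} verbatim: since (i) gives $\is{E(\sigma)|A|f}{|A|f}\Le c\is{E(\sigma)Af}{Af}$ for all $\sigma$ and $f$, applying this to each piece $\sigma'_j,f'_j$ of a common refinement and summing yields $\bigl\|\sum_i E(\sigma_i)|A|f_i\bigr\|^2\Le c\bigl\|\sum_i E(\sigma_i)Af_i\bigr\|^2$ for every finite system. In particular $\sum_i E(\sigma_i)Af_i=0$ forces $\sum_i E(\sigma_i)|A|f_i=0$, so the assignment $\widetilde S_0\bigl(\sum_i E(\sigma_i)Af_i\bigr)=\sum_i E(\sigma_i)|A|f_i$ is a well-defined linear map on $\mathcal M:=\lin\{E(\sigma)Af\colon\sigma\in\borel{\rbb_+},\,f\in\dz A\}$ with $\|\widetilde S_0\|\Le\sqrt c$.

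Next I would extend $\widetilde S_0$ to a bounded $S_0$ on $\overline{\mathcal M}$ and set $T:=S_0Q$, where $Q\in\ogr{\hh}$ is the orthogonal projection onto $\overline{\mathcal M}$; taking $n=1$, $\sigma_1=\rbb_+$ gives $TA=|A|$. The key point is commutation with $E$: because $E(\tau)E(\sigma)Af=E(\tau\cap\sigma)Af$, the subspace $\mathcal M$ is $E$-invariant and $\widetilde S_0E(\tau)=E(\tau)\widetilde S_0$ on $\mathcal M$; passing to the closure shows that $\overline{\mathcal M}$ reduces $E$ and $S_0E(\tau)=E(\tau)S_0$ on $\overline{\mathcal M}$, whence $TE(\tau)=E(\tau)T$ on all of $\hh$. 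Theorem~\ref{bir} applied to the selfadjoint $|A|$ then upgrades this to $T|A|\subseteq|A|T$, giving (ii). For the converse (ii)$\Rightarrow$(i), Theorem~\ref{bir} turns $T|A|\subseteq|A|T$ into $TE(\sigma)=E(\sigma)T$, and the computation $\is{E(\sigma)|A|f}{|A|f}=\|E(\sigma)TAf\|^2=\|TE(\sigma)Af\|^2\Le\|T\|^2\is{E(\sigma)Af}{Af}$ (using $|A|f=TAf$ and $E(\sigma)^2=E(\sigma)=E(\sigma)^*$) yields (i) with $\cfr_A\Le\|T\|^2\Le c$.

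Assertion (iii) is immediate on taking $\sigma=\rbb_+$: there $E(\rbb_+)=I_{\hh}$ and $\|Af\|=\||A|f\|$, so \eqref{inc1} reads $\||A|f\|^2\Le\cfr_A\||A|f\|^2$, forcing $\cfr_A\Ge1$ once $A\neq0$. For (v) I would note $\|TAf\|=\||A|f\|=\|Af\|$, so $T$ is isometric on $\overline{\ob A}$; hence $T(\overline{\ob A})$ is closed, contains $T(\ob A)=\ob{|A|}$, and lies in $\overline{\ob{|A|}}$, so it equals $\overline{\ob{|A|}}$. Writing $A=U|A|$, and using that $U^*U$ is the projection onto $\overline{\ob{|A|}}$, one gets $U^*Af=U^*U|A|f=|A|f=TAf$, i.e.\ $U^*$ and $T$ agree on $\overline{\ob A}$; since $\jd{U^*}=\overline{\ob A}^{\perp}$ we have $U^*=U^*P=TP$, that is $U=PT^*$.

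Finally, for (iv) I would take the very operator $T$ built in (i)$\Rightarrow$(ii) with $c=\cfr_A$. Then $\|T\|\Le\sqrt{\cfr_A}$ by construction, while (ii)$\Rightarrow$(i) gives $\cfr_A\Le\|T\|^2$, so $\|T\|=\sqrt{\cfr_A}$. For the range, $\widetilde S_0$ maps into $\ob{|A|}$ by \eqref{oba}, hence $\ob T=\ob{S_0}\subseteq\overline{\ob{|A|}}$; combined with the surjectivity $T(\overline{\ob A})=\overline{\ob{|A|}}$ supplied by (v), this forces $\ob T=\overline{\ob{|A|}}$. I expect the main obstacle to be exactly this range identity: a priori the range of $S_0$ is only dense in $\overline{\ob{|A|}}$, and $S_0$ need not be bounded below (the reversed absolute continuity is not assumed), so the closedness of $\ob T$ cannot be read off directly. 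Borrowing the isometry statement (v), which guarantees that $T$ already carries the closed subspace $\overline{\ob A}\subseteq\overline{\mathcal M}$ onto all of $\overline{\ob{|A|}}$, is what closes this gap.
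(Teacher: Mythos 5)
Your proposal is correct and follows essentially the same route as the paper's proof: the same disjointification inequality, the same bounded map on $\lin\{E(\sigma)Af\}$ extended by the projection $Q$, the same $\sigma\cap\tau$ commutation argument combined with Theorem \ref{bir}, the same norm computation for (ii)$\Rightarrow$(i) and (iii), and the same resolution of (iv) via the constructed $T$ plus the isometry/range identity. The only (harmless) variation is in (v), where you recover $U=PT^*$ from the known properties of the existing polar decomposition ($U^*U$ being the projection onto $\overline{\ob{|A|}}$ and $\jd{U^*}=\overline{\ob{A}}^{\perp}$), while the paper instead defines $U:=PT^*$ and checks, using the identity $PT^*T|_{\overline{\ob{A}}}=I_{\overline{\ob{A}}}$ from the proof of Lemma \ref{izonp} and the kernel equality $\jd{U}=\jd{A}$, that it is the partial isometry of the polar decomposition.
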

   \begin{proof}
Let $E$ be the spectral measure of $|A|$.

(i)$\Rightarrow$(ii) Without loss of generality we can
assume that $c=\cfr_A$. Arguing as in the proof of
implication (iii)$\Rightarrow$(i) of Theorem
\ref{chq2}, we deduce that for all finite systems
$\sigma_1, \ldots, \sigma_n \in \borel{\rbb_+}$ and
$f_1, \ldots, f_n \in \dz{A}$,
   \begin{align}   \label{ineqc}
\Big\|\sum_{i=1}^n E(\sigma_i)|A| f_i\Big\|^2 \Le c
\Big\|\sum_{i=1}^n E(\sigma_i)A f_i\Big\|^2.
   \end{align}
Define the closed vector space $\hh_0$ by
   \begin{align*}
\hh_0 = \overline{\lin\big\{E(\sigma)Af\colon \sigma
\in \borel{\rbb_+}, \, f \in \dz{A}\big\}}.
   \end{align*}
It follows from \eqref{ineqc} that there exists a
unique bounded linear map $T_0 \colon \hh_0 \to \hh$
such that $\|T_0\|\Le \sqrt{c}$ and
   \begin{align} \label{toes}
T_0E(\sigma) A = E(\sigma) |A|, \quad \sigma \in
\borel{\rbb_+}.
   \end{align}
Define the operator $T \in \ogr{\hh}$ by $Tf=T_0Qf$
for $f \in \hh$, where $Q\in \ogr{\hh}$ is the
orthogonal projection of $\hh$ onto $\hh_0$. Then $T$
is an extension of $T_0$ such that $\|T\|\Le
\sqrt{c}$. Substituting $\sigma=\rbb_+$ into
\eqref{toes}, we get
   \begin{align}   \label{tam}
TA = |A|.
   \end{align}
Applying the equation \eqref{toes} twice yields
   \begin{multline} \label{numerek}
TE(\sigma)(E(\tau)Af) = TE(\sigma \cap \tau)Af =
E(\sigma \cap \tau)|A|f
   \\
= E(\sigma) E(\tau)|A|f = E(\sigma)T (E(\tau)Af), \quad
f \in \dz{A}, \, \sigma, \tau \in \borel{\rbb_+}.
   \end{multline}
   Hence $TE(\sigma)|_{\hh_0} = E(\sigma)T|_{\hh_0}$
for all $\sigma \in \borel{\rbb_+}$. Since $\hh_0$
reduces the spectral measure $E$ and $T|_{\hh \ominus
\hh_0} = 0$, we obtain $TE(\cdot)=E(\cdot)T$. Applying
Theorem \ref{bir}, we get $T|A|\subseteq |A|T$. By
\eqref{toes}, the definition of $T$ and $E(\cdot)|A|
\subseteq |A| E(\cdot)$, we have
   \begin{align} \label{alej}
\ob{T} = \ob{T_0} \subseteq \overline{\ob{|A|}}.
   \end{align}
It follows from \eqref{tam} that
   \begin{align} \label{cos}
\|T(Af)\| = \big\||A|f\big\| = \|Af\|, \quad f \in
\dz{A}.
   \end{align}
Thus the operator $T|_{\overline{\ob{A}}} \colon
\overline{\ob{A}} \to \hh$ is an isometry. Since
$TA=|A|$, we see that
   \begin{align*}
\overline{\ob{|A|}}=T(\overline{\ob{A}})\subseteq
\ob{T} \overset{\eqref{alej}} \subseteq
\overline{\ob{|A|}},
   \end{align*}
which means that $\ob{T} = \overline{\ob{|A|}}$.

(ii)$\Rightarrow$(i) Let $P \in \ogr{\hh}$ be the
orthogonal projection of $\hh$ onto
$\overline{\ob{A}}$ and let $U:=PT^*$. If $h \in \hh$,
then
   \allowdisplaybreaks
   \begin{align*}
h \in \jd{U} & \iff \is{PT^*h}{Af} = 0 \quad \text{for
all } f\in \dz{A}
   \\
& \iff \is{h}{TAf} = 0 \quad \text{for all } f \in
\dz{A}
   \\
& \overset{\eqref{comtn}} \iff \is{h}{|A|f} = 0 \quad
\text{for all } f \in \dz{A}
   \\
& \iff h \in \hh \ominus \overline{\ob{|A|}} =
\jd{|A|}=\jd{A},
   \end{align*}
which shows that $\jd{U}=\jd{A}$. Using the equality
$TA=|A|$ and arguing as in \eqref{cos}, we see that
the operator $T|_{\overline{\ob{A}}} \colon
\overline{\ob{A}} \to \hh$ is an isometry and
$T(\overline{\ob{A}}) = \overline{\ob{|A|}}$. This
implies that $I_{\overline{\ob{A}}} =
PT^*T|_{\overline{\ob{A}}}$ (see the proof of Lemma
\ref{izonp}). Thus
   \begin{align*}
A = PT^*T A \overset{ \eqref{comtn}}= PT^* |A| = U
|A|.
   \end{align*}
This and the equalities $\jd{U}=\jd{A}=\jd{|A|}$ imply
that $A=U|A|$ is the polar decomposition of $A$, which
proves (v).

It follows from \eqref{comtn} and Theorem \ref{bir}
that
   \begin{align} \notag
\is{E(\sigma)|A|f}{|A|f} & = \is{E(\sigma)T A f}{T A
f} = \big\|T E(\sigma) A f\big\|^2
   \\
&\Le c \is{E(\sigma)Af}{Af}, \quad \sigma \in
\borel{\rbb_+},\, f \in \dz{A}, \label{numerek1}
   \end{align}
which shows that $A$ is weakly quasinormal and $\cfr_A
\Le c$.

(iii) If $A\neq 0$ satisfies \eqref{inc1}, then
substituting $\sigma=\rbb_+$ into \eqref{inc1} yields
   \begin{align*}
\||A|f\|^2 \Le c \|Af\|^2 = c\||A|f\|^2, \quad f \in
\dz{A},
   \end{align*}
which gives (iii).

It only remains to prove (iv). Assume that (i) holds.
It follows from the proof of the implication
(i)$\Rightarrow$(ii) that there exists $T\in
\ogr{\hh}$ such that \eqref{comtn} holds with
$c=\cfr_A$, i.e.\ $\|T\|\Le \sqrt{\cfr_A}$, and
$\ob{T} = \overline{\ob{|A|}}$. Now applying the
reverse implication (ii)$\Rightarrow$(i) with
$c=\|T\|^2$, we get $\cfr_A \Le \|T\|^2$. This
completes the proof.
   \end{proof}
Reversing the absolute continuity in Theorem
\ref{chq2}(iii) leads to a new class of operators that
is essentially wider than the class of weakly
quasinormal operators (cf.\ Section \ref{Exam}). The
new class can be characterized as follows.
   \begin{thm}\label{abcon}
Let $A$ be a closed densely defined operator in $\hh$
and $E$ be the spectral measure of $|A|$. Then the
following two conditions are equivalent\/{\em :}
   \begin{enumerate}
   \item[(i)] $\is{E(\cdot)|A|f}{|A|f} \ll
\is{E(\cdot)Af}{Af}$ for every $f \in \dz{A}$,
   \item[(ii)] there exists a $($unique\/$)$ linear
map $T_0 \colon \hscr_0 \to \ob{|A|}$ such
that\footnote{\;Note that $\ob{A} \subseteq \hscr_0$
and $E(\sigma)\hscr_0 \subseteq \hscr_0$ for all
$\sigma \in \borel{\rbb_+}$.} $T_0 A=|A|$ and
$T_0E(\sigma)|_{\hscr_0}=E(\sigma)T_0$ for all $\sigma
\in \borel{\rbb_+}$, where
   \begin{align*}
\hscr_0 = \lin\big\{E(\sigma)Af \colon \sigma \in
\borel{\rbb_+ }, \, f \in \dz{A}\big\}.
   \end{align*}
   \end{enumerate}
Moreover, the following assertion holds for any $c \in
\rbb_+${\em :}
\begin{enumerate}
 \item[(iii)] $A$ is weakly
quasinormal with $\cfr_A \Le c$ if and only if $T_0$
is bounded and $\|T_0\| \Le \sqrt{c}$, where $T_0$ is
as in {\em (ii)}.
   \end{enumerate}
   \end{thm}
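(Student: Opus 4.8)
The plan is to mirror the two constructions already carried out in the proofs of Theorems \ref{chq2} and \ref{chq3}, but with the roles of $Af$ and $|A|f$ interchanged, since here absolute continuity runs in the opposite direction. For the implication (i)$\Rightarrow$(ii) I would attempt to define $T_0$ on the generators of $\hscr_0$ by
\[
T_0\Big(\sum_{i=1}^n E(\sigma_i)Af_i\Big) = \sum_{i=1}^n E(\sigma_i)|A|f_i, \quad \sigma_i \in \borel{\rbb_+},\, f_i \in \dz{A},\, n \in \nbb.
\]
The whole difficulty lies in well-definedness, i.e.\ in the implication $\sum_{i=1}^n E(\sigma_i)Af_i = 0 \Rightarrow \sum_{i=1}^n E(\sigma_i)|A|f_i = 0$. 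To establish it I would reuse the disjointification device from the proof of (iii)$\Rightarrow$(i) of Theorem \ref{chq2}: replace $\sigma_1,\ldots,\sigma_n$ by pairwise disjoint Borel sets $\sigma_1',\ldots,\sigma_m'$ and form the vectors $f_j'$, obtaining the identities $\|\sum_i E(\sigma_i)Af_i\|^2 = \sum_j \is{E(\sigma_j')Af_j'}{Af_j'}$ and $\|\sum_i E(\sigma_i)|A|f_i\|^2 = \sum_j \is{E(\sigma_j')|A|f_j'}{|A|f_j'}$. If the first left-hand side vanishes, each summand $\is{E(\sigma_j')Af_j'}{Af_j'}$ is zero, and condition (i) applied to $f_j'$ forces $\is{E(\sigma_j')|A|f_j'}{|A|f_j'}=0$ for every $j$, so the second left-hand side vanishes as well. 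This is the step I expect to be the main obstacle; everything afterwards is routine bookkeeping.

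Granting well-definedness, $T_0$ is linear, and the substitution $n=1$, $\sigma_1=\rbb_+$ gives $T_0A=|A|$. Because $E(\sigma)|A| \subseteq |A| E(\sigma)$, we have $E(\sigma)|A|f = |A|E(\sigma)f \in \ob{|A|}$, so $T_0$ maps into $\ob{|A|}$; the same relation together with $E(\sigma)E(\tau)Af = E(\sigma\cap\tau)Af$ yields the footnote's claims $\ob{A}\subseteq\hscr_0$ and $E(\sigma)\hscr_0\subseteq\hscr_0$. I would check the intertwining on a generator by the chain $T_0E(\sigma)(E(\tau)Af) = T_0(E(\sigma\cap\tau)Af) = E(\sigma\cap\tau)|A|f = E(\sigma)E(\tau)|A|f = E(\sigma)T_0(E(\tau)Af)$ and extend by linearity. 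Uniqueness is then immediate, since any $T_0$ as in (ii) must satisfy $T_0(E(\sigma)Af) = E(\sigma)T_0(Af) = E(\sigma)|A|f$ on the generators of $\hscr_0$.

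For (ii)$\Rightarrow$(i) the argument is short: fixing $f\in\dz{A}$ and $\sigma\in\borel{\rbb_+}$ with $\is{E(\sigma)Af}{Af}=0$, i.e.\ $E(\sigma)Af=0$, I would compute $E(\sigma)|A|f = E(\sigma)T_0(Af) = T_0(E(\sigma)Af) = 0$ using $T_0A=|A|$ and the intertwining relation, whence $\is{E(\sigma)|A|f}{|A|f}=0$, which is exactly the asserted absolute continuity. Finally, for (iii) I would observe that, by its defining formula, $\|T_0\|\Le\sqrt{c}$ means precisely $\|\sum_i E(\sigma_i)|A|f_i\|^2 \Le c\,\|\sum_i E(\sigma_i)Af_i\|^2$ for all finite systems; specializing to $n=1$ recovers \eqref{inc1} with this $c$, giving $\cfr_A\Le c$, while the converse implication $\cfr_A\Le c \Rightarrow \|T_0\|\Le\sqrt{c}$ follows from the disjointification estimate established in the proof of (i)$\Rightarrow$(ii) of Theorem \ref{chq3}.
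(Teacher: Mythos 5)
Your proposal is correct and follows essentially the same route as the paper: the same defining formula for $T_0$, well-definedness via the disjointification identities \eqref{r6}--\eqref{r7} from the proof of Theorem \ref{chq2}, the intertwining check as in \eqref{numerek}, and the same short arguments for uniqueness and for (ii)$\Rightarrow$(i). The only cosmetic difference is in assertion (iii), where the paper deduces $\|T_0\|\Le\sqrt{c}$ by identifying $T|_{\hscr_0}=T_0$ for the operator $T$ of Theorem \ref{chq3}(ii), while you invoke the estimate \eqref{ineqc} directly; these are equivalent in substance.
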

   \begin{proof}
(i)$\Rightarrow$(ii) Arguing as in the proof of the
implication (iii)$\Rightarrow$(i) of Theorem
\ref{chq2}, we show that the following implication
holds for all finite systems $\sigma_1, \ldots,
\sigma_n \in \borel{\rbb_+}$ and $f_1, \ldots, f_n \in
\dz{A}$,
   \begin{align*}
\sum_{i=1}^n E(\sigma_i)Af_i = 0 \quad \implies \quad
\sum_{i=1}^n E(\sigma_i)|A|f_i = 0.
   \end{align*}
This, combined with the fact that $E(\sigma)|A|
\subseteq |A| E(\sigma)$ for all $\sigma \in
\borel{\rbb_+}$, implies that the map $T_0\colon
\hscr_0\to \ob{|A|}$ given by
   \begin{align}     \label{defto+}
T_0\Big(\sum_{i=1}^n E(\sigma_i)Af_i\Big) = \sum_{i=1}^n
E(\sigma_i)|A|f_i, \quad \sigma_i \in \borel{\rbb_+}, \,
f_i\in \dz{A}, n \in \nbb,
   \end{align}
is well-defined and linear. Substituting $n=1$ and
$\sigma_1=\rbb_+$ into \eqref{defto+}, we see that
$T_0 A = |A|$. Arguing as in \eqref{numerek} with
$T_0$ in place of $T$, we verify that
$T_0E(\sigma)|_{\hscr_0}=E(\sigma)T_0|_{\hscr_0}$ for
all $\sigma \in \borel{\rbb_+}$.

It is clear that any linear map $T_0 \colon \hscr_0
\to \ob{|A|}$ with the properties specified by (ii)
must satisfy \eqref{defto+}, and as such is unique.

(ii)$\Rightarrow$(i) If $f \in \dz{A}$ and $\sigma \in
\borel{\rbb_+}$ are such that $\is{E(\sigma)Af}{Af}=0$,
then $E(\sigma) A f=0$, and thus
   \begin{align*}
\is{E(\sigma)|A|f}{|A|f} = \is{E(\sigma)T_0 A f}{T_0 A
f} = \big\|T_0 E(\sigma) A f\big\|^2 =0,
   \end{align*}
which gives (i).

Now we justify the ``moreover'' part of the
conclusion. If $A$ is weakly quasinormal with $\cfr_A
\Le c$ and $T\in\ogr{\hh}$ is as in Theorem
\ref{chq3}\,(ii), then clearly (use Theorem \ref{bir})
$T|_{\hscr_0}=T_0$, which implies the boundedness of
$T_0$ and gives $\|T_0\| \Le \sqrt{c}$. In turn, if
$T_0$ is bounded and $\|T_0\| \Le \sqrt{c}$, then by
mimicking the argument used in \eqref{numerek1} with
$T_0$ in place of $T$, we see that $A$ is weakly
quasinormal with $\cfr_A \Le c$. This completes the
proof.
   \end{proof}
   \begin{rem}
It is worth mentioning that if $A$ is a closed densely
defined operator in $\hh$ which is not weakly
quasinormal and which satisfies the condition (i) of
Theorem \ref{abcon} (see Section \ref{Exam} for how to
construct such operators), then the operator $T_0$
appearing in the condition (ii) of Theorem \ref{abcon}
is unbounded and it extends the isometric operator
$T_0|_{\ob{A}}$ (for the latter, consult \eqref{cos}).
   \end{rem}
   \section{Quasinormality revisited}
   In this short section we show that quasinormality
is completely characterized by the inequality
\eqref{inc1} with $c=1$. Recall that if $A$ is a
nonzero weakly quasinormal operator, then $\cfr_A \Ge
1$ (see Theorem \ref{chq3}\,(iii)).
   \begin{thm} \label{quasiwkwkw2}
Let $A$ be a nonzero closed densely defined operator
in $\hh$. Then the following two conditions are
equivalent\/{\em :}
   \begin{enumerate}
   \item[(i)] $A$ is quasinormal,
   \item[(ii)] $A$ is weakly quasinormal with $\cfr_A=1$.
   \end{enumerate}
   \end{thm}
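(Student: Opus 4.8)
The plan is to prove the equivalence by leveraging the machinery already established in Theorems \ref{chq2} and \ref{chq3}, since both conditions have been characterized there in terms of a bounded operator $T$ satisfying commutation relations with the spectral measure $E$ of $|A|$.

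First I would establish the easy direction (i)$\Rightarrow$(ii). If $A$ is quasinormal, then by Theorem \ref{chq2} we have the \emph{equality} $\is{E(\sigma)Af}{Af} = \is{E(\sigma)|A|f}{|A|f}$ for all $\sigma \in \borel{\rbb_+}$ and $f \in \dz{A}$. This equality trivially yields the inequality \eqref{inc1} with $c=1$, so $A$ is weakly quasinormal with $\cfr_A \Le 1$. Since $A \neq 0$, Theorem \ref{chq3}\,(iii) forces $\cfr_A \Ge 1$, whence $\cfr_A = 1$.

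For the reverse direction (ii)$\Rightarrow$(i), I would invoke Theorem \ref{chq3} with $c=1$. By part (iv) of that theorem, weak quasinormality with $\cfr_A = 1$ gives a bounded operator $T \in \ogr{\hh}$ satisfying \eqref{comtn} (namely $TA = |A|$ and $T|A| \subseteq |A|T$) together with the norm condition $\|T\| = \sqrt{\cfr_A} = 1$, so $T$ is a contraction. Now by part (v) of Theorem \ref{chq3}, the restriction $T|_{\overline{\ob{A}}}$ is an isometry. The key point is that $T$ is simultaneously a contraction on all of $\hh$ and isometric on the closed subspace $\overline{\ob{A}}$; this is exactly the hypothesis of Lemma \ref{izonp}, which therefore yields $T^*T h = h$ for every $h \in \overline{\ob{A}}$.

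The main obstacle is to convert this information into the quasinormality of $A$, and here I expect the polar decomposition formula from Theorem \ref{chq3}\,(v) to do the work. That part gives $A = U|A|$ with $U = PT^*$, where $P$ is the orthogonal projection onto $\overline{\ob{A}}$. The goal is to show $U|A| \subseteq |A|U$, which by \eqref{quas} is equivalent to quasinormality. Since $T$ commutes with the spectral measure $E$ (established in the proof of Theorem \ref{chq3} via Theorem \ref{bir}), I would verify that $T^*$ also commutes with $E$, and then combine $U = PT^*$ with the relation $T^*T = I$ on $\overline{\ob{A}}$ to check that $U$ commutes with $E$ as well; by Theorem \ref{bir} this gives $U|A| \subseteq |A|U$. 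The delicate step is handling the projection $P$ and confirming that it does not obstruct the commutation—this is where the isometry of $T$ on $\overline{\ob{A}}$ (equivalently $T^*T = I$ there, from Lemma \ref{izonp}) is essential, since it identifies $U = PT^*$ as a genuine partial isometry with the correct behavior on $\ob{|A|}$, closing the loop back to \eqref{quas}.
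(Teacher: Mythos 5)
Your proof of (i)$\Rightarrow$(ii) and your setup for (ii)$\Rightarrow$(i) coincide with the paper's: Theorem \ref{chq2} together with Theorem \ref{chq3}\,(iii) handles the first direction, and for the converse both you and the paper produce a contraction $T$ satisfying \eqref{comtn} with $c=1$, observe via Theorem \ref{chq3}\,(v) that $T|_{\overline{\ob{A}}}$ is an isometry, and apply Lemma \ref{izonp} to get $T^*Th=h$ for all $h\in\overline{\ob{A}}$. The gap is in your final step. You propose to conclude by showing that $U=PT^*$ commutes with $E$ and then invoking Theorem \ref{bir} and \eqref{quas}. But the only commutation you possess is $TE(\cdot)=E(\cdot)T$ (hence the same for $T^*$); nothing you have established says that $P$, the orthogonal projection onto $\overline{\ob{A}}$, commutes with $E$, and ``combining $U=PT^*$ with $T^*T=I$ on $\overline{\ob{A}}$'' does not formally yield it. Worse, in the presence of the facts already proved, $P$ commuting with $E$ is \emph{equivalent} to the quasinormality of $A$: quasinormality gives $E(\sigma)\ob{A}\subseteq\ob{A}$, hence that $\overline{\ob{A}}$ reduces $E$; conversely, if $P$ commutes with $E$, your own argument finishes the proof. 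So the step you label ``delicate'' and defer is not a routine verification --- it is the entire content of the implication, and as stated your proof is circular at that point.

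The missing idea is the paper's identity
\begin{align*}
A=T^*TA=T^*|A|,
\end{align*}
which follows at once from $T^*T|_{\overline{\ob{A}}}=I_{\overline{\ob{A}}}$ and $TA=|A|$, and which never appears in your proposal. With it, quasinormality is verified directly from the definition, with no recourse to $U$, $P$ or \eqref{quas}: since $T^*$ commutes with $E$ (by \eqref{comtn} and Theorem \ref{bir}) and $E(\sigma)|A|\subseteq|A|E(\sigma)$, one has
\begin{align*}
E(\sigma)A=E(\sigma)T^*|A|=T^*E(\sigma)|A|\subseteq T^*|A|E(\sigma)=AE(\sigma),
\quad \sigma\in\borel{\rbb_+}.
\end{align*}
If you insist on your route through $U$, the same identity is what repairs it: it shows that $U$ agrees with $T^*$ on $\ob{|A|}$ (indeed $U|A|f=Af=T^*TAf=T^*|A|f$), whence $U=T^*Q$, where $Q$ is the orthogonal projection onto $\overline{\ob{|A|}}$. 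Unlike $P$, the projection $Q$ manifestly commutes with $E$, because $\jd{|A|}=E(\{0\})\hh$ reduces $E$; therefore $U=T^*Q$ commutes with $E$, Theorem \ref{bir} gives $U|A|\subseteq|A|U$, and \eqref{quas} applies. Either way, the identity $A=T^*|A|$ is the step that closes the argument.
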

   \begin{proof}
(i)$\Rightarrow$(ii) Apply Theorem \ref{chq2}.

(ii)$\Rightarrow$(i) By Theorem \ref{chq3} there
exists an operator $T \in \ogr{\hh}$ that satisfies
\eqref{comtn} with $c=1$. Then, by the assertion (v)
of Theorem \ref{chq3}, the operator
$T|_{\overline{\ob{A}}} \colon \overline{\ob{A}} \to
\hh$ is an isometry. Since $\|T\| \Le 1$, we infer
from Lemma \ref{izonp} that
$I_{\overline{\ob{A}}}=T^*T|_{\overline{\ob{A}}}$.
This and the equality $TA=|A|$ yield
   \begin{align} \label{atta}
A=T^*TA = T^* |A|.
   \end{align}
By \eqref{comtn} and Theorem \ref{bir}, the operator
$T^*$ commutes with the spectral measure $E$ of $|A|$.
Hence, by using the fact hat $E(\cdot)|A| \subseteq
|A| E(\cdot)$, we deduce that
   \begin{align*}
E(\sigma)A \overset{\eqref{atta}} = E(\sigma) T^*|A| =
T^* E(\sigma) |A| \subseteq T^* |A| E(\sigma)
\overset{\eqref{atta}} = A E(\sigma)
   \end{align*}
for all $\sigma \in \borel{\rbb_+}$, which means that
$A$ is quasinormal.
   \end{proof}
 \section{Boundedness of weakly
quasinormal operators} Taking a quick look at
Corollary \ref{sleu} would suggest that if $A$ is a
quasinormal operator, then $\ob{A} \subseteq
\dz{|A|^\alpha}$ for every positive real number
$\alpha$. However, as shown below, this is not
necessarily the case. In fact, if $A$ is an unbounded
quasinormal operator, then $\ob{A} \not\subseteq
\dz{|A|^\alpha}$ for every positive real number
$\alpha$. The particular case of $\alpha \Ge 1$ can be
deduced from \cite[Lemma A.1]{Se-St} and \cite[Theorem
3.3]{ota} (because $\dz{|A|^{\alpha}} \subseteq
\dz{|A|} = \dz{A} \subseteq \dz{A^*}$ for $\alpha \Ge
1$). It is worth pointing out that there are unbounded
closed densely defined Hilbert space operators $A$
such that $\ob{A}\subseteq \dz{|A|}$ (cf.\
\cite{ota}).
   \begin{pro}\label{qinvb}
If $A$ is a weakly quasinormal operator in $\hh$ such
that $\ob{A} \subseteq \dz{|A|^\alpha}$ for some
positive real number $\alpha$, then $A \in \ogr{\hh}$.
   \end{pro}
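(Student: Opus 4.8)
The plan is to reduce the statement to a purely spectral property of the modulus $|A|$. Writing $E$ for the spectral measure of $|A|$ and setting $\beta = \alpha+1 > 1$, I would show that weak quasinormality together with the hypothesis $\ob{A} \subseteq \dz{|A|^\alpha}$ forces $\dz{|A|} \subseteq \dz{|A|^{\beta}}$, and then invoke the fact that a positive self-adjoint operator whose domain is contained in the domain of a strictly higher power of itself must be bounded. Since $\dz{A} = \dz{|A|}$ and $\|Af\| = \||A|f\|$, boundedness of $|A|$ immediately gives $\dz{A} = \hh$ and $A \in \ogr{\hh}$.

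For the reduction, fix $f \in \dz{A} = \dz{|A|}$ and put $\mu_f = \is{E(\cdot)f}{f}$ and $\nu_f = \is{E(\cdot)Af}{Af}$, both finite measures on $\rbb_+$. Because $E(\sigma)|A| \subseteq |A|E(\sigma)$, the spectral theorem yields $\is{E(\sigma)|A|f}{|A|f} = \int_\sigma t^2 \,\D\mu_f(t)$ for every $\sigma \in \borel{\rbb_+}$. The defining inequality \eqref{inc1} of weak quasinormality then reads $\int_\sigma t^2 \,\D\mu_f \Le c\,\nu_f(\sigma)$ for all $\sigma$, with $c$ independent of $f$; as a setwise inequality between finite measures this upgrades to $\int_{\rbb_+} \varphi\, t^2\, \D\mu_f \Le c \int_{\rbb_+} \varphi \,\D\nu_f$ for every Borel $\varphi \Ge 0$. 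The hypothesis $Af \in \dz{|A|^\alpha}$ means exactly $\int_{\rbb_+} t^{2\alpha}\,\D\nu_f < \infty$, so taking $\varphi(t) = t^{2\alpha}$ gives $\int_{\rbb_+} t^{2\beta}\,\D\mu_f = \int_{\rbb_+} t^{2\alpha}\, t^2\, \D\mu_f \Le c \int_{\rbb_+} t^{2\alpha}\, \D\nu_f < \infty$, i.e.\ $f \in \dz{|A|^{\beta}}$. As $f$ was an arbitrary element of $\dz{A}$, this proves $\dz{|A|} \subseteq \dz{|A|^{\beta}}$. (Alternatively, one can run this step through the operator $T$ of Theorem \ref{chq3}: since $T$ commutes with $E$ and hence with $|A|^\alpha$, the identity $TA = |A|$ gives $|A|^{\beta}f = T(|A|^\alpha Af)$, which is defined whenever $Af \in \dz{|A|^\alpha}$.)

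I expect the main work to be the final spectral fact: if a positive self-adjoint $B$ satisfies $\dz{B} \subseteq \dz{B^\beta}$ with $\beta > 1$, then $B$ is bounded. I would argue by contraposition. If $B = |A|$ is unbounded then infinitely many of the spectral projections $E([m,m+1))$, $m \in \nbb$, are nonzero, so one can select a rapidly increasing sequence $m_k \Ge 2^k$ with $E([m_k, m_k+1)) \neq 0$ and unit vectors $e_k$ in their ranges; these are pairwise orthogonal and reduce $|A|$. Setting $a_k = \big(k\,\||A|e_k\|\big)^{-1}$ and $f = \sum_k a_k e_k$, orthogonality gives $\|f\|^2 = \sum_k a_k^2 < \infty$ and $\||A|f\|^2 = \sum_k a_k^2 \||A|e_k\|^2 = \sum_k k^{-2} < \infty$, so $f \in \dz{|A|}$; however $\||A|^{\beta} e_k\|^2 \Ge m_k^{2\beta}$ while $\||A|e_k\|^2 \Le (m_k+1)^2 \Le 4 m_k^2$, whence $a_k^2 \||A|^\beta e_k\|^2 \Ge m_k^{2\alpha}/(4k^2) \to \infty$ and $\||A|^{\beta}f\|^2 = \infty$, so $f \notin \dz{|A|^\beta}$. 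This contradicts $\dz{|A|}\subseteq \dz{|A|^\beta}$, completing the argument.
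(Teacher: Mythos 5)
Your proof is correct, and its first half coincides with the paper's own argument: the paper likewise fixes $f \in \dz{|A|}$, writes $\int_\sigma x^2 \is{E(\D x)f}{f} = \is{E(\sigma)|A|f}{|A|f} \Le \cfr_A \is{E(\sigma)Af}{Af}$, integrates $x^{2\alpha}$ against this setwise inequality, and uses $\ob{A}\subseteq\dz{|A|^{\alpha}}$ to obtain $\int_0^\infty x^{2(1+\alpha)}\is{E(\D x)f}{f} \Le \cfr_A \big\||A|^{\alpha}Af\big\|^2 < \infty$, i.e.\ $\dz{|A|}\subseteq\dz{|A|^{1+\alpha}}$. The difference is the endgame. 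The paper observes that the reverse inclusion $\dz{|A|^{1+\alpha}}\subseteq\dz{|A|}$ always holds by \cite[Lemma A.1]{Se-St}, upgrades to the equality $\dz{|A|}=\dz{|A|^{1+\alpha}}$, and then invokes that same lemma once more to conclude that $|A|$, hence $A$, is bounded. You instead prove the required spectral fact from scratch: if a positive selfadjoint operator is unbounded, then picking nonzero spectral projections $E([m_k,m_k+1))$ with $m_k\Ge 2^k$, unit vectors $e_k$ in their ranges, and weights $a_k=(k\,\||A|e_k\|)^{-1}$, the vector $f=\sum_k a_k e_k$ lies in $\dz{|A|}$ but not in $\dz{|A|^{\beta}}$ for $\beta=1+\alpha>1$. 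Your estimates are sound: the intervals are pairwise disjoint, so all the relevant norms decompose orthogonally, and $a_k^2\||A|^{\beta}e_k\|^2 \Ge m_k^{2\alpha}/(4k^2)\to\infty$ kills membership in $\dz{|A|^{\beta}}$. What this buys is self-containedness and a slightly weaker hypothesis in the key step (you only need the one-sided inclusion $\dz{|A|}\subseteq\dz{|A|^{1+\alpha}}$, never the reverse one), at the cost of the explicit construction that the paper compresses into a citation.
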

   \begin{proof}
First we show that
   \begin{align} \label{dz2=}
\dz{|A|} = \dz{|A|^{1+\alpha}}.
   \end{align}
The inclusion ``$\supseteq$'' is always true (cf.\
\cite[Lemma A.1]{Se-St}). To prove the reverse
inclusion, take $f \in \dz{|A|}$. Denote by $E$ the
spectral measure of $|A|$. Since $A$ is weakly
quasinormal, we get
   \begin{align*}
\int_\sigma x^2 \is{E(\D x)f}{f} =
\is{E(\sigma)|A|f}{|A|f} \Le \cfr_A
\is{E(\sigma)Af}{Af}, \quad \sigma \in \borel{\rbb_+}.
   \end{align*}
This and the assumption $\ob{A} \subseteq
\dz{|A|^\alpha}$ imply that
   \begin{align*}
\int_0^\infty x^{2(1+\alpha)} \is{E(\D x)f}{f} &=
\int_0^\infty x^{2\alpha} \is{E(\D x)|A|f}{|A|f}
   \\
&\Le \cfr_A \int_0^\infty x^{2\alpha} \is{E(\D
x)Af}{Af} = \cfr_A \big\||A|^\alpha Af\big\|^2 <
\infty.
   \end{align*}
Hence $f \in \dz{|A|^{1+\alpha}}$, which completes the
proof of \eqref{dz2=}. Now, by applying \cite[Lemma
A.1]{Se-St} to \eqref{dz2=}, we conclude that $|A| \in
\ogr{\hh}$. This in turn implies that $A \in
\ogr{\hh}$, which completes the proof.
   \end{proof}
   \section{Weakly quasinormal weighted shifts on directed trees}
The basic facts on directed trees and weighted shifts
on directed trees can be found in \cite{j-j-s}. We
refer the reader to \cite{j-j-s2} for the recent
applications of this idea to general operator theory.

Let $\tcal=(V,E)$ be a directed tree ($V$ and $E$
stand for the sets of vertices and edges of $\tcal$,
respectively). Set $V^\circ=V\setminus \{\koo\}$ if
$\tcal$ has a root and $V^\circ=V$ otherwise. For
every vertex $u \in V^\circ$ there exists a unique
vertex, denoted by $\pa u$, such that $(\pa u,u)\in
E$. Set $\dzi u = \{v\in V\colon (u,v)\in E\}$ for $u
\in V$. If $W \subseteq V$, we put $\dzi W =
\bigcup_{v \in W} \dzi v$ and $\des W =
\bigcup_{n=0}^\infty \dzin n W$, where $\dzin{0}{W} =
W$ and $\dzin{n+1}{W} = \dzi{\dzin{n}{W}}$ for all
integers $n\Ge 0$. For $u \in V$, we set $\dzin n
u=\dzin n {\{u\}}$ and $\des{u}=\des{\{u\}}$.

Denote by $\ell^2(V)$ the Hilbert space of all square
summable complex functions on $V$ with the standard
inner product. The set $\{e_u\}_{u\in V}$, where $e_u
:= \chi_{\{u\}}$, is an orthonormal basis of
$\ell^2(V)$. Put $\escr = \lin \{e_u\colon u \in V\}$.

Given $\lambdab = \{\lambda_v\}_{v \in V^\circ}
\subseteq \cbb$, we define the operator $\slam$ in
$\ell^2(V)$ by
   \begin{align*}
   \begin{aligned}
\dz {\slam} & = \{f \in \ell^2(V) \colon
\varLambda_\tcal f \in \ell^2(V)\},
   \\
\slam f & = \varLambda_\tcal f, \quad f \in \dz
{\slam},
   \end{aligned}
   \end{align*}
where $\varLambda_\tcal$ is the map defined on
functions $f\colon V \to \cbb$ via
   \begin{align} \label{lamtauf}
(\varLambda_\tcal f) (v) =
   \begin{cases}
\lambda_v \cdot f\big(\pa v\big) & \text{ if } v\in
V^\circ,
   \\
0 & \text{ if } v=\koo.
   \end{cases}
   \end{align}
Such $\slam$ is called a {\em weighted shift} on the
directed tree $\tcal$ with weights $\{\lambda_v\}_{v
\in V^\circ}$. Let us recall that weighted shifts on
directed trees are always closed (cf.\
\cite[Proposition 3.1.2]{j-j-s}).

Before characterizing weak quasinormality of a densely
defined weighted shift $\slam$ on $\tcal$, we describe
the spectral measure of $|\slam|^\alpha$ for $\alpha
\in (0,\infty)$.
   \begin{lem} \label{spmm}
If $\slam$ is a densely defined weighted shift on a
directed tree $\tcal$ with weights $\lambdab =
\{\lambda_v\}_{v \in V^\circ}$, $\alpha \in
(0,\infty)$ and $E$ is the spectral measure of
$|\slam|^\alpha$, then
   \begin{align*}
(E (\sigma) f)(v) = \chi_\sigma(\|\slam e_v\|^\alpha)
f(v), \quad v \in V,\, f \in \ell^2(V), \, \sigma \in
\borel{\rbb_+}.
   \end{align*}
   \end{lem}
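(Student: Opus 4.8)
The plan is to reduce the whole statement to the fact that both $\slam^{*}\slam$ and hence $|\slam|^{\alpha}$ act \emph{diagonally} with respect to the orthonormal basis $\{e_{v}\}_{v\in V}$, and then to invoke the (essentially unique) description of the spectral measure of a diagonal selfadjoint operator. First I would read off from \eqref{lamtauf} the action of $\slam$ and $\slam^{*}$ on the basis: since $\slam$ is densely defined, each $e_{u}\in\dz{\slam}$ and $\slam e_{u}=\sum_{w\in\dzi u}\lambda_{w}e_{w}$, while $\slam^{*}e_{w}=\overline{\lambda_{w}}\,e_{\pa w}$ for $w\in V^{\circ}$ (with $\slam^{*}e_{\koo}=0$); in particular $\|\slam e_{u}\|^{2}=\sum_{w\in\dzi u}|\lambda_{w}|^{2}$. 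Applying $\slam^{*}$ termwise to $\slam e_{u}$ — which is legitimate because $\slam^{*}$ is closed and both the partial sums $\sum_{w\in F}\lambda_{w}e_{w}$ and their images $\sum_{w\in F}|\lambda_{w}|^{2}e_{u}$ converge as the finite set $F$ increases to $\dzi u$ — yields
\[
\slam^{*}\slam\, e_{u}=\|\slam e_{u}\|^{2}e_{u},\qquad u\in V.
\]
Thus $\{e_{v}\}_{v\in V}$ is an orthonormal basis of eigenvectors of the positive selfadjoint operator $\slam^{*}\slam=|\slam|^{2}$, so $\slam^{*}\slam$ coincides with the diagonal operator determined by these eigenvalues, and by the functional calculus $|\slam|^{\alpha}=(\slam^{*}\slam)^{\alpha/2}$ is the diagonal operator with $|\slam|^{\alpha}e_{v}=\|\slam e_{v}\|^{\alpha}e_{v}$ for every $v\in V$. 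The handling of the possibly infinite set $\dzi u$ of children, i.e.\ the justification of the termwise application of $\slam^{*}$, is the one place demanding care (it may alternatively be quoted from \cite{j-j-s}); everything else here is formal.

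It then remains to identify the spectral measure of such a diagonal operator. I would set, for $\sigma\in\borel{\rbb_{+}}$,
\[
(F(\sigma)f)(v)=\chi_{\sigma}(\|\slam e_{v}\|^{\alpha})\,f(v),\qquad v\in V,\ f\in\ell^{2}(V),
\]
that is, $F(\sigma)$ is multiplication by the $\{0,1\}$-valued function $v\mapsto\chi_{\sigma}(\|\slam e_{v}\|^{\alpha})$, equivalently the orthogonal projection of $\ell^{2}(V)$ onto $\overline{\lin}\{e_{v}\colon\|\slam e_{v}\|^{\alpha}\in\sigma\}$. One checks directly that each $F(\sigma)$ is an orthogonal projection, that $F(\varnothing)=0$ and $F(\rbb_{+})=I_{\ell^{2}(V)}$ (since every $\|\slam e_{v}\|^{\alpha}\in\rbb_{+}$), and that $F$ is countably additive in the strong operator topology — this last point following from the identity $\chi_{\bigsqcup_{n}\sigma_{n}}=\sum_{n}\chi_{\sigma_{n}}$ together with the square summability of $f$. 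Hence $F$ is a Borel spectral measure on $\rbb_{+}$.

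Finally I would identify $F$ with the spectral measure of the diagonal operator $|\slam|^{\alpha}$. Evaluating $\int_{\rbb_{+}}t\,\D F(t)$ on a basis vector gives $\|\slam e_{v}\|^{\alpha}e_{v}=|\slam|^{\alpha}e_{v}$, so the two selfadjoint operators $\int_{\rbb_{+}}t\,\D F(t)$ and $|\slam|^{\alpha}$ agree on the core $\escr$ and therefore coincide. By the uniqueness of the spectral measure asserted in the spectral theorem, $F=E$, which is exactly the claimed formula.
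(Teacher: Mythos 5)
Your proof is correct and follows essentially the same route as the paper: both arguments rest on the diagonal action $|\slam|^\alpha e_v = \|\slam e_v\|^\alpha e_v$ on the basis $\{e_v\}_{v\in V}$ together with the identification of the spectral measure of a diagonal operator with multiplication by $\chi_\sigma(\|\slam e_v\|^\alpha)$. The only difference is one of packaging: the paper obtains these two facts by citing \cite[Proposition 3.4.3]{j-j-s} and \cite[(2.2.1)]{j-j-s}, whereas you prove them from scratch, and your two key steps --- the closed-graph argument giving $\slam^*\slam e_u=\|\slam e_u\|^2e_u$ and the core-plus-uniqueness identification of your multiplication measure $F$ with $E$ --- are both sound.
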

   \begin{proof}
By \cite[Proposition 3.4.3]{j-j-s}, $\escr \subseteq
\dz{\slam} \cap \dz{|\slam|^\alpha}$ and
$|\slam|^\alpha e_u = \|\slam e_u\|^\alpha e_u$ for
all $u \in V$. Hence, by \cite[(2.2.1)]{j-j-s}, we
have
   \begin{align*}
E (\sigma) f = \sum_{u \in V} \chi_\sigma(\|\slam
e_u\|^\alpha) \is f {e_u} e_u, \quad f \in \ell^2(V),
\, \sigma \in \borel{\rbb_+},
   \end{align*}
which implies that
   \begin{align*}
(E (\sigma) f)(v) = \is{E (\sigma) f}{e_v} =
\chi_\sigma(\|\slam e_v\|^\alpha) f(v), \quad v \in V,
\, f \in \ell^2(V), \, \sigma \in \borel{\rbb_+}.
   \end{align*}
This completes the proof.
   \end{proof}
Now we characterize weak quasinormality of weighted
shifts on directed trees.
   \begin{thm}\label{wsquasineq}
Let $\slam$ be a densely defined weighted shift on a
directed tree $\tcal$ with weights $\lambdab =
\{\lambda_v\}_{v \in V^\circ}$ and $E$ be the spectral
measure of $|\slam|$. Then the following assertions
are valid.
   \begin{enumerate}
   \item[(i)] For any $c\in \rbb_+$,
$\slam$ is weakly quasinormal with $\cfr_{\slam} \Le
c$ if and only if\;\footnote{\;\label{expr}We adhere
to the convention that $\sum_{v\in\varnothing}
|\lambda_v|^2=0$. Note also that $\escr \subseteq
\dz{\slam}$ (cf.\ \cite[Proposition 3.1.3(v)]{j-j-s}),
which means that the expression \eqref{abc2} makes
sense.}
   \begin{gather}  \label{abc2}
\|\slam e_u\|^2 \Le c \sum_{v\in \dzie{u}}
|\lambda_v|^2, \quad u \in V,
   \end{gather}
where $\dzie{u} := \{v \in \dzi{u}\colon \|\slam e_v\|
= \|\slam e_u\|\}$.
   \item[(ii)] $\is{E(\cdot)|\slam|f}{|\slam|f} \ll
\is{E(\cdot)\slam f}{\slam f}$ for all $f \in
\dz{\slam}$ if and only if
   \begin{align} \label{abc3}
\forall u \in V\colon \|\slam e_u\| \neq 0 \implies
\dzip{u} \neq \varnothing,
   \end{align}
where $\dzip{u}:=\{v \in \dzie{u}\colon \lambda_v \neq
0\}$.
   \end{enumerate}
   \end{thm}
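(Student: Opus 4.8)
The plan is to reduce everything to two explicit formulas for the scalar spectral measures, obtained by diagonalizing both $|\slam|$ and $E$ in the orthonormal basis $\{e_u\}_{u\in V}$. By \cite[Proposition 3.4.3]{j-j-s} one has $|\slam| e_u = \|\slam e_u\| e_u$, while $\slam e_u = \sum_{v\in\dzi u}\lambda_v e_v$, equivalently $(\slam f)(v)=\lambda_v f(\pa v)$ for $v\in V^\circ$. Combining these with Lemma \ref{spmm} taken at $\alpha=1$, and reindexing the sum coming from $\slam f$ by grouping the children of each vertex (so that $\pa v=u\iff v\in\dzi u$, the root contributing nothing), I expect to arrive, for every $f\in\dz{\slam}$ and $\sigma\in\borel{\rbb_+}$, at
\[
\is{E(\sigma)|\slam|f}{|\slam|f} = \sum_{u\in V} |f(u)|^2 \|\slam e_u\|^2 \chi_\sigma(\|\slam e_u\|),
\]
\[
\is{E(\sigma)\slam f}{\slam f} = \sum_{u\in V} |f(u)|^2 \sum_{v\in\dzi u} |\lambda_v|^2 \chi_\sigma(\|\slam e_v\|).
\]
Both series converge since the measures are finite, and both are purely atomic, concentrated on the countable set $\{\|\slam e_u\|\colon u\in V\}$.

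For (i), the necessity of \eqref{abc2} comes by testing \eqref{inc1} on $f=e_u$ (legitimate since $\escr\subseteq\dz{\slam}$) with $\sigma=\{\|\slam e_u\|\}$: the left side collapses to $\|\slam e_u\|^2$ and the right side to $c\sum_{v\in\dzie u}|\lambda_v|^2$, because $\chi_\sigma(\|\slam e_v\|)=1$ exactly when $v\in\dzie u$. For sufficiency I would argue term by term in $u$: since both displayed sums are nonnegative sums indexed by $u$ and weighted by $|f(u)|^2$, it suffices to prove, for each fixed $u$ and $\sigma$,
\[
\|\slam e_u\|^2 \chi_\sigma(\|\slam e_u\|) \Le c \sum_{v\in\dzi u} |\lambda_v|^2 \chi_\sigma(\|\slam e_v\|).
\]
If $\|\slam e_u\|\notin\sigma$ the left side vanishes; if $\|\slam e_u\|\in\sigma$, then every $v\in\dzie u$ satisfies $\chi_\sigma(\|\slam e_v\|)=1$, so the right side dominates $c\sum_{v\in\dzie u}|\lambda_v|^2\Ge\|\slam e_u\|^2$ by \eqref{abc2}. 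Summing over $u$ recovers \eqref{inc1}.

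For (ii), since both measures are atomic and share the same countable set of possible atoms, absolute continuity for a fixed $f$ is equivalent to the implication $\is{E(\{t\})\slam f}{\slam f}=0\Rightarrow\is{E(\{t\})|\slam|f}{|\slam|f}=0$ for every $t>0$ (the atom at $t=0$ carries no mass for $|\slam|f$). Assuming \eqref{abc3}, fix $t>0$ with $\is{E(\{t\})\slam f}{\slam f}=0$; the second formula then forces $f(\pa v)=0$ for every $v$ with $\|\slam e_v\|=t$ and $\lambda_v\neq0$. For any $u$ with $\|\slam e_u\|=t$ we have $\|\slam e_u\|\neq0$, so \eqref{abc3} yields some $v\in\dzip u$, and since $\pa v=u$, $\|\slam e_v\|=t$ and $\lambda_v\neq0$, we get $f(u)=0$; hence $\is{E(\{t\})|\slam|f}{|\slam|f}=0$. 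Conversely, if \eqref{abc3} fails at some $u_0$ with $\|\slam e_{u_0}\|\neq0$ and $\dzip{u_0}=\varnothing$, I would test on $f=e_{u_0}$ and $t=\|\slam e_{u_0}\|$: the formulas give $\is{E(\{t\})\slam f}{\slam f}=\sum_{v\in\dzie{u_0}}|\lambda_v|^2=0$ while $\is{E(\{t\})|\slam|f}{|\slam|f}=t^2>0$, contradicting absolute continuity.

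I do not anticipate a genuine obstacle: the substance lies entirely in the two diagonalization formulas, after which (i) and (ii) become finite, term-by-term comparisons of nonnegative atomic measures. The only points demanding care are the reindexing $\pa v=u\iff v\in\dzi u$ in the formula for $\is{E(\cdot)\slam f}{\slam f}$, and the observation that testing single atoms $\{t\}$ suffices for absolute continuity precisely because the spectral measure of $|\slam|$ is supported on the countable set $\{\|\slam e_u\|\colon u\in V\}$.
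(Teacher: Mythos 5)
Your proposal is correct and follows essentially the same route as the paper: both rest on the two diagonalization formulas for $\is{E(\sigma)|\slam|f}{|\slam|f}$ and $\is{E(\sigma)\slam f}{\slam f}$ obtained from Lemma \ref{spmm} and the reindexing $V^\circ=\bigsqcup_{u\in V}\dzi{u}$, followed by testing on $f=e_u$ with the singleton $\sigma=\{\|\slam e_u\|\}$ for necessity and a term-by-term comparison over $u$ for sufficiency. The only difference is that for assertion (ii) the paper merely says ``argue as in the proof of (i),'' whereas you spell out the details via the atomicity of the two measures on the countable set $\{\|\slam e_v\|\colon v\in V\}$; this is a correct completion of exactly the argument the paper intends.
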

It is worth noting that if \eqref{abc2} holds, then
according to our summation convention (see footnote
\ref{expr}) we have
   \begin{align} \label{abc}
\forall u \in V\colon \|\slam e_u\| \neq 0 \implies
\dzie{u} \neq \varnothing.
   \end{align}
   \begin{proof}[Proof of Theorem \ref{wsquasineq}]
It follows from Lemma \ref{spmm} that
   \begin{align*}
\is{E (\sigma) f}{f} = \sum_{u \in V}
\chi_\sigma(\|\slam e_u\|) |f(u)|^2, \quad \sigma \in
\borel{\rbb_+}, f \in \ell^2(V).
   \end{align*}
Since $(|\slam| f) (u) = \|\slam e_u\| f(u)$ for all
$u \in V$ and $f \in \dz{\slam}$ (cf.\
\cite[Proposition 3.4.3]{j-j-s}), we deduce that
   \begin{align}   \label{mod1}
\is{E(\sigma)|\slam|f}{|\slam|f} = \sum_{u \in V}
\chi_\sigma(\|\slam e_u\|) \|\slam e_u\|^2 |f(u)|^2
   \end{align}
for all $\sigma \in \borel{\rbb_+}$ and $f \in
\dz{\slam}$. In view of the equality $V^\circ=
\bigsqcup_{u\in V} \dzi u$ (cf.\ \cite[Proposition
2.1.2]{j-j-s}), we have
   \begin{align}     \notag
\is{E(\sigma)\slam f}{\slam f} & = \sum_{u \in
V^\circ} \chi_\sigma(\|\slam e_u\|) |\lambda_u|^2
|f(\pa{u})|^2
   \\  \label{mod2}
& = \sum_{u \in V} \Big(\sum_{v \in \dzi{u}}
\chi_\sigma(\|\slam e_v\|) |\lambda_v|^2\Big) |f(u)|^2
   \end{align}
for all $\sigma \in \borel{\rbb_+}$ and $f \in
\dz{\slam}$.

(i) Since, by \cite[Proposition 3.1.3(v)]{j-j-s},
$\escr \subseteq \dz{\slam}$, we infer from
\eqref{mod1} and \eqref{mod2} that the inequality
\eqref{inc1} holds with $A=\slam$ if and only if
   \begin{align} \label{wlnier}
\chi_\sigma(\|\slam e_u\|) \|\slam e_u\|^2 \Le c
\sum_{v \in \dzi{u}} \chi_\sigma(\|\slam e_v\|)
|\lambda_v|^2, \quad u \in V,\, \sigma \in
\borel{\rbb_+}.
   \end{align}
First we show that \eqref{wlnier} implies
\eqref{abc2}. Suppose \eqref{wlnier} holds. Fix $u \in
V$ and define the set $\varOmega_u=\{\|\slam e_v\|
\colon v \in \dzi{u}\} \subseteq \rbb_+$. We may
assume that $\|\slam e_u\| \neq 0$. Then, by
\eqref{lamtauf}, $\dzi{u} \neq \varnothing$. If
$\varOmega_u = \rbb_+$, then clearly $\dzie{u} \neq
\varnothing$. If $\varOmega_u \neq \rbb_+$, then
substituting $\sigma=\{t\}$ with $t \in \rbb_+
\setminus \varOmega_u$ into \eqref{wlnier}, we deduce
that $\|\slam e_u\| \neq t$. Hence $\rbb_+\setminus
\varOmega_u \subseteq \rbb_+ \setminus \{\|\slam
e_u\|\}$, which yields $\dzie{u} \neq \varnothing$.
This proves \eqref{abc}. By substituting
$\sigma=\{\|\slam e_u\|\}$ into \eqref{wlnier}, we
obtain \eqref{abc2}.

Now we show that \eqref{abc2} implies \eqref{wlnier}.
Fix $u \in V$ and $\sigma \in \borel{\rbb_+}$. Without
loss of generality we can assume that $\|\slam e_u\|
\neq 0$ and $\|\slam e_u\| \in \sigma$. Then, by
\eqref{abc2},
   \begin{align*}
\chi_\sigma(\|\slam e_u\|) \|\slam e_u\|^2 \Le c
\sum_{v\in \dzie{u}} \chi_\sigma(\|\slam e_v\|)
|\lambda_v|^2 \hspace{1ex}\Le c \sum_{v \in \dzi{u}}
\chi_\sigma(\|\slam e_v\|) |\lambda_v|^2,
   \end{align*}
which shows that \eqref{wlnier} holds. This completes
the proof of (i).

(ii) We can argue as in the proof of (i). The details
are left to the reader.
   \end{proof}
The following characterization of quasinormality of
weighted shifts on directed trees generalizes that of
\cite[Proposition 8.1.7]{j-j-s} to the case of
unbounded operators. The present proof is quite
different from that for bounded operators.
   \begin{cor}\label{qn}
Let $\slam$ be a densely defined weighted shift on a
directed tree $\tcal$ with weights $\lambdab =
\{\lambda_v\}_{v \in V^\circ}$. Then the following two
conditions are equivalent\/{\em :}
   \begin{enumerate}
   \item[(i)] $\slam$ is quasinormal,
   \item[(ii)] $\|\slam
e_u\|=\|\slam e_v\|$ for all $u \in V$ and $v \in \dzi
u$ such that $\lambda_v \neq 0$.
   \end{enumerate}
Moreover, if $V^\circ \neq \varnothing$ and $\lambda_v
\neq 0$ for all $v\in V^\circ$, then $\slam$ is
quasinormal if and only if $\|\slam\|^{-1}\slam$ is an
isometry.
   \end{cor}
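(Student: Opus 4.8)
The plan is to deduce the equivalence (i)$\Leftrightarrow$(ii) from the two principal results already at our disposal:\ the identification of quasinormality with weak quasinormality of constant $1$ (Theorem~\ref{quasiwkwkw2}) and the combinatorial description of weak quasinormality of $\slam$ (Theorem~\ref{wsquasineq}(i)). First I would dispose of the degenerate case $\slam=0$, which by the identity $\|\slam e_u\|^2=\sum_{v\in\dzi{u}}|\lambda_v|^2$ (read off from \eqref{mod2} with $\sigma=\rbb_+$ and $f=e_u$) is precisely the case $\lambda_v=0$ for all $v\in V^\circ$; here (i) holds because the zero operator is quasinormal and (ii) holds vacuously. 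Assuming $\slam\neq0$, I would argue that $\slam$ is quasinormal if and only if it is weakly quasinormal with $\cfr_{\slam}=1$ (Theorem~\ref{quasiwkwkw2}), and that, because $\cfr_{\slam}\Ge1$ for nonzero weakly quasinormal operators (Theorem~\ref{chq3}(iii)), this is the same as being weakly quasinormal with $\cfr_{\slam}\Le1$. By Theorem~\ref{wsquasineq}(i) with $c=1$, the latter is equivalent to
   \begin{align*}
\|\slam e_u\|^2 \Le \sum_{v\in\dzie{u}}|\lambda_v|^2, \quad u\in V.
   \end{align*}

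The core of the argument is then to show that this inequality is equivalent to (ii). Since $\dzie{u}\subseteq\dzi{u}$ and all summands are nonnegative, one always has $\sum_{v\in\dzie{u}}|\lambda_v|^2\Le\sum_{v\in\dzi{u}}|\lambda_v|^2=\|\slam e_u\|^2$; hence the displayed inequality forces the equality $\sum_{v\in\dzi{u}\setminus\dzie{u}}|\lambda_v|^2=0$, that is, $\lambda_v=0$ for every $v\in\dzi{u}\setminus\dzie{u}$. Reading the contrapositive, this says exactly that $v\in\dzi{u}$ and $\lambda_v\neq0$ force $v\in\dzie{u}$, i.e.\ $\|\slam e_v\|=\|\slam e_u\|$, which is (ii); the converse implication is immediate upon running the same computation backwards, since (ii) makes the two sums over $\dzie{u}$ and $\dzi{u}$ coincide. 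This establishes (i)$\Leftrightarrow$(ii).

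For the ``moreover'' part I would specialize to $V^\circ\neq\varnothing$ with all $\lambda_v\neq0$. Then condition (ii) loses its proviso and reads $\|\slam e_u\|=\|\slam e_v\|$ for every edge, i.e.\ across every parent--child pair; since the underlying graph of $\tcal$ is connected, this propagates to make $\|\slam e_v\|$ equal to a common constant $M$ for all $v\in V$, with $M>0$ because some $\lambda_v\neq0$. Using the identity $\|\slam f\|^2=\sum_{u\in V}\|\slam e_u\|^2|f(u)|^2$ (again from \eqref{mod2} with $\sigma=\rbb_+$), the constancy $\|\slam e_v\|\equiv M$ is equivalent to $\|\slam f\|=M\|f\|$ for all $f\in\dz{\slam}$; this in turn says that $\slam$ is bounded with $\dz{\slam}=\ell^2(V)$ and $\|\slam\|=M$, and that $M^{-1}\slam=\|\slam\|^{-1}\slam$ is an isometry. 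Combining these equivalences with (i)$\Leftrightarrow$(ii) yields the claim.

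The whole argument is essentially a bookkeeping exercise once Theorems~\ref{quasiwkwkw2} and~\ref{wsquasineq} are in place; the step requiring the most care is the reduction of \eqref{abc2} with $c=1$ to the pointwise condition (ii), where the point is that the value $c=1$ turns the weak-quasinormality inequality into an equality and thereby rigidly pins down which weights may be nonzero. A secondary point to handle cleanly is the appeal to connectivity of $\tcal$ in the ``moreover'' part, together with the observation that quasinormality then forces boundedness, so that the expression $\|\slam\|^{-1}\slam$ is meaningful.
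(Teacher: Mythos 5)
Your proof is correct and follows essentially the same route as the paper: both reduce (i)$\Leftrightarrow$(ii) to Theorem \ref{wsquasineq}(i) with $c=1$ by way of Theorems \ref{quasiwkwkw2} and \ref{chq3}(iii) (the paper invokes Theorem \ref{chq2} for the forward direction, which sidesteps your separate treatment of the case $\slam=0$), and both pin down which weights may be nonzero via the identity $\|\slam e_u\|^2=\sum_{v\in\dzi{u}}|\lambda_v|^2$. The only divergence is the ``moreover'' part, which the paper delegates to the argument of \cite[Proposition 8.1.7]{j-j-s} while you spell out the connectivity/constancy argument explicitly --- a harmless and in fact more self-contained expansion.
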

   \begin{proof}
(i)$\Rightarrow$(ii) By Theorem \ref{chq2}, $\slam$ is
weakly quasinormal with $\cfr_{\slam}\Le 1$. One can
deduce from \cite[Proposition 3.1.3]{j-j-s} and
Theorem \ref{wsquasineq}(i), applied to $c=1$, that
$\lambda_v = 0$ for all $v\in \dzi{u} \setminus
\dzie{u}$ and $u \in V$. This implies (ii).

(ii)$\Rightarrow$(i) By our present assumption
$\lambda_v = 0$ for all $v\in \dzi{u} \setminus
\dzie{u}$ and $u \in V$. This implies that
\eqref{abc2} holds with $c=1$. Hence, by Theorem
\ref{wsquasineq}(i), $\slam$ is weakly quasinormal
with $\cfr_{\slam} \Le 1$. Applying Theorems
\ref{chq3}(iii) and \ref{quasiwkwkw2} yields (i).

Arguing as in the proof of \cite[Proposition
8.1.7]{j-j-s}, we deduce the ``moreover'' part of the
conclusion from the equivalence
(i)$\Leftrightarrow$(ii).
   \end{proof}
We will show by example that there are unbounded
quasinormal weighted shifts on directed trees (cf.\
Example \ref{eunb}).

   The following corollary is closely related to
Proposition \ref{qinvb}.
   \begin{cor} \label{sleu}
If $\slam$ is a quasinormal weighted shift on a
directed tree $\tcal$, then $\slam(\escr) \subseteq
\dz{|\slam|^{\alpha}}$ for every positive real number
$\alpha$.
   \end{cor}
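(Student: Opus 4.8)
The plan is to reduce the claim to a statement about the single basis images $\slam e_u$, $u \in V$, and then to verify membership in $\dz{|\slam|^\alpha}$ by a direct spectral computation. Since $\slam(\escr) = \lin\{\slam e_u \colon u \in V\}$ and $\dz{|\slam|^\alpha}$ is a linear subspace of $\ell^2(V)$, it suffices to prove that $\slam e_u \in \dz{|\slam|^\alpha}$ for every $u \in V$.

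First I would record the explicit form of $\slam e_u$. Reading off \eqref{lamtauf}, one has $(\slam e_u)(v) = \lambda_v$ for $v \in \dzi{u}$ and $(\slam e_u)(v)=0$ otherwise, so that $\slam e_u = \sum_{v \in \dzi{u}} \lambda_v e_v$ and, in particular, $\|\slam e_u\|^2 = \sum_{v \in \dzi{u}} |\lambda_v|^2 < \infty$ (the finiteness being guaranteed by $\escr \subseteq \dz{\slam}$). Next, using Lemma \ref{spmm} together with the functional calculus for the positive selfadjoint operator $|\slam|^\alpha$, I would note that a function $f \in \ell^2(V)$ lies in $\dz{|\slam|^\alpha}$ if and only if $\sum_{v \in V} \|\slam e_v\|^{2\alpha}\, |f(v)|^2 < \infty$.

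Applying this criterion to $f = \slam e_u$ reduces the membership question to the finiteness of $\sum_{v \in \dzi{u}} \|\slam e_v\|^{2\alpha}\, |\lambda_v|^2$. At this stage the quasinormality hypothesis enters through Corollary \ref{qn}(ii): every $v \in \dzi{u}$ with $\lambda_v \neq 0$ satisfies $\|\slam e_v\| = \|\slam e_u\|$, while the terms with $\lambda_v = 0$ contribute nothing. Hence the sum collapses to $\|\slam e_u\|^{2\alpha} \sum_{v \in \dzi{u}} |\lambda_v|^2 = \|\slam e_u\|^{2(1+\alpha)} < \infty$, which yields $\slam e_u \in \dz{|\slam|^\alpha}$ and completes the argument.

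I do not expect a serious obstacle here; the essential content is that quasinormality, as encoded in Corollary \ref{qn}(ii), forces all children of $u$ carrying nonzero weight to sit at the single spectral level $\|\slam e_u\|$ of $|\slam|$. This is precisely what permits the factor $\|\slam e_v\|^{2\alpha}$ to be pulled out of the sum as the constant $\|\slam e_u\|^{2\alpha}$; without quasinormality those children could lie at arbitrarily large levels and the series could diverge. The only mild care required is the bookkeeping of the domain criterion extracted from Lemma \ref{spmm} and the observation that $\|\slam e_u\|$ is finite because $e_u \in \dz{\slam}$.
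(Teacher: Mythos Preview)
Your proposal is correct and follows essentially the same route as the paper's own proof: reduce to showing $\slam e_u \in \dz{|\slam|^\alpha}$, identify the domain criterion $\sum_{v\in V}\|\slam e_v\|^{2\alpha}|f(v)|^2<\infty$, and use Corollary~\ref{qn}(ii) to collapse the sum over $\dzi u$ to $\|\slam e_u\|^{2(1+\alpha)}$. The only cosmetic difference is that the paper derives the domain criterion via \cite[Proposition~3.4.3 and Lemma~2.2.1]{j-j-s} rather than via Lemma~\ref{spmm} plus the spectral theorem, but the computation \eqref{olin} is identical to yours.
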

   \begin{proof}
By Corollary \ref{qn} and \cite[Proposition
3.1.3]{j-j-s}, for every $u \in V$,
   \begin{multline} \label{olin}
\sum_{v\in V} \|\slam e_v\|^{2\alpha} |(\slam
e_u)(v)|^2 \overset{\eqref{lamtauf}}= \sum_{v\in
V^\circ} \|\slam e_v\|^{2\alpha} |\lambda_v|^2
e_u(\paa(v))
   \\
= \sum_{v\in \dzi{u}} \|\slam e_v\|^{2\alpha}
|\lambda_v|^2 = \|\slam e_u\|^{2\alpha} \sum_{v\in
\dzi{u}} |\lambda_v|^2 = \|\slam e_u\|^{2(\alpha+1)}.
   \end{multline}
Since, by \cite[Proposition 3.4.3]{j-j-s}, $\escr
\subseteq \dz{|\slam|^\alpha}$ and $|\slam|^\alpha e_u
= \|\slam e_u\|^\alpha e_u$ for every $u \in V$, we
deduce that a function $f \in \ell^2(V)$ belongs to
$\dz{|\slam|^\alpha}$ if and only if $\sum_{v \in V}
\|\slam e_v\|^{2\alpha} |f(v)|^2 < \infty$ (consult
the proof of \cite[Lemma 2.2.1]{j-j-s}). This combined
with \eqref{olin} gives $\slam e_u \in
\dz{|\slam|^\alpha}$ for $u\in V$, which completes the
proof.
   \end{proof}
   \section{\label{Exam}Examples}
This section provides examples of weighted shifts on
directed trees that illustrate the subject of this
paper. We begin by considering the case of quasinormal
operators. It follows from Corollary \ref{qn} that
quasinormal weighted shifts on directed trees with
nonzero weights are automatically bounded. However, if
some of the weights are allowed to be zero, then
quasinormal weighted shifts may be unbounded. Below,
we construct an example of an injective quasinormal
weighted shift on a directed binary tree whose
restriction to $\ell^2(\des{u})$ is unbounded for
every $u\in V$.
   \begin{exa} \label{eunb}
Let $\tcal$ be a directed tree with root such that for
every $u \in V$, the set $\dzi u$ has exactly two
vertices. By \cite[Corollary 2.1.5]{j-j-s}, we have
   \begin{align*}
V^\circ = \bigsqcup_{n=1}^\infty \dzin{n}{\koo}.
   \end{align*}
We define recursively a sequence
$\{\{\lambda_v\}_{v\in \dzin{n}{\koo}}\}_{n=1}^\infty$
of systems of nonnegative real numbers. We begin with
$n=1$. If $v_1,v_2 \in \dzi{\koo}$ and $v_1 \neq v_2$,
then we set $\lambda_{v_1}=0$ and $\lambda_{v_2}=1$.
Suppose that we have constructed the systems
$\{\lambda_v\}_{v\in \dzin{j}{\koo}} \subseteq
[0,\infty)$ for $j=1, \ldots, n$. To construct
$\{\lambda_v\}_{v\in \dzin{n+1}{\koo}}$, note that
(cf.\ \cite[(6.1.3)]{j-j-s})
   \begin{align} \label{num1}
\dzin{n+1}{\koo} = \bigsqcup_{u \in \dzin{n}{\koo}}
\dzi{u}.
   \end{align}
Fix $u \in \dzin{n}{\koo}$. By our assumption $\dzi{u}
= \{v,w\}$ with $v\neq w$. If $\lambda_u=0$, then we
set $\lambda_v=0$ and $\lambda_w = n+1$. If
$\lambda_u\neq0$, then we set $\lambda_v=0$ and
$\lambda_w = \lambda_u$. In view of \eqref{num1}, the
recursive procedure gives us the system
$\lambdab:=\{\lambda_v\}_{v \in V^\circ}$. Let $\slam$
be the weighted shift on $\tcal$ with weights
$\lambdab$. By \cite[Proposition 3.1.3]{j-j-s},
$\slam$ is densely defined. It is a routine matter to
verify that $\slam$ satisfies the condition (ii) of
Corollary \ref{qn}. Hence the operator $\slam$ is
quasinormal. It is easily seen, by using
\cite[Proposition 3.1.8]{j-j-s}, that for every $u \in
V$, the operator $\slam|_{\tiny{\mbox{\sc
Lin}}\{e_v\colon v \in \des{u}\}}$ (which acts in
$\ell^2(\des{u})$) is unbounded. The injectivity of
$\slam$ follows from \cite[Proposition 3.1.7]{j-j-s}.
   \end{exa}
Now we show how to construct non-quasinormal weighted
shifts on certain directed trees that are weakly
quasinormal as well as non-weakly quasinormal weighted
shifts on the same directed trees that satisfy the
condition (i) of Theorem \ref{abcon} (with $A=\slam$).
Note that this is not possible for classical weighted
shifts (that is, weighted shifts on the directed tress
\mbox{$(\zbb_+, \{(n,n+1)\colon n \in \zbb_+\})$} and
\mbox{$(\zbb, \{(n,n+1)\colon n \in \zbb\})$}, cf.\
\cite[Remark 3.1.4]{j-j-s}), because by Theorem
\ref{wsquasineq} and Corollary \ref{qn} every
classical weighted shift which satisfies the condition
(i) of Theorem \ref{abcon} is automatically
quasinormal. Hence, non-quasinormal classical weighted
shifts (many such exist) do not satisfy this
condition. We also show that for every $c \in
(1,\infty)$, there exists an injective weighted shift
$\slam$ on a directed tree such that $\cfr_{\slam}=c$,
were $\cfr_{\slam}$ is understood as in Section
\ref{bbcc}. In Examples \ref{nq-ineq} and
\ref{nq-ineq2}, we consider the cases of bounded and
unbounded non-hyponormal operators with the properties
mentioned above. All this can also be achieved in the
class of hyponormal operators, as is shown in Example
\ref{nq-ineq3}.
   \begin{center}
   \includegraphics[width=12cm]
   {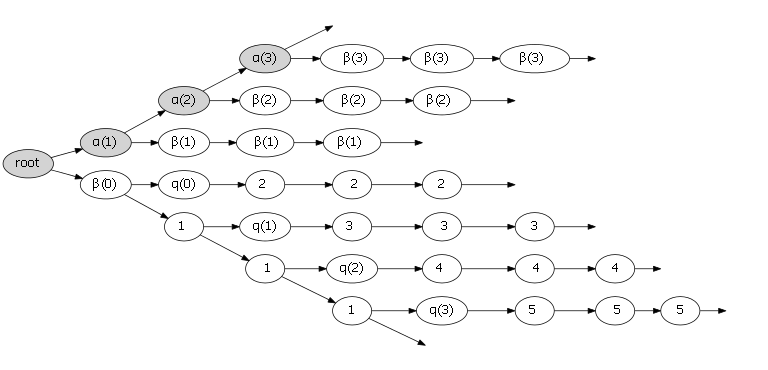}

   {\small {\sf Figure 1}}
   \end{center}
   \begin{exa} \label{nq-ineq}
Let $\tcal$ be the directed tree as in Figure 1, where
$\{\alpha(n)\}_{n=1}^\infty$,
$\{\beta(n)\}_{n=0}^\infty$, $\{q(n)\}_{n=0}^\infty$
and $\{\gamma(n)\}_{n=0}^\infty$ are sequences of
positive real numbers satisfying the following two
conditions
   \begin{gather}   \label{sumcia}
\alpha(n)^2 + \beta(n-1)^2=1, \quad n \in \nbb,
   \\   \label{sumcia2}
1+q(n)^2=\gamma(n)^2, \quad n\in\zbb_+.
   \end{gather}
Let $\slam$ be the weighted shift on $\tcal$ with
weights given by Figure 1. By \cite[Proposition
3.1.3]{j-j-s}, $\slam$ is densely defined (and closed
as a weighted shift on a directed tree). It follows
from \eqref{sumcia}, \eqref{sumcia2} and
\cite[Proposition 3.1.8]{j-j-s} that $\slam$ is
bounded if and only if the sequence
$\{q(n)\}_{n=0}^\infty$ is bounded.

We first note that $\slam$ is not hyponormal. Indeed,
since
   \begin{align*}
\sum_{v \in \dzi{u_i}} \frac{|\lambda_{v}|^2}{\|\slam
e_v\|^2} = 1 + \alpha(i+1)^2 > 1, \quad i \in \nbb,
   \end{align*}
where $u_i$ is the vertex corresponding to
$\alpha(i)$, we infer from \cite[Theorem 5.1.2 and
Remark 5.1.5]{j-j-s} that $\slam$ is not hyponormal.

Suppose now that $\inf_{n\in \nbb} \alpha(n)= 0$. Then
$\slam$ is not weakly quasinormal. Indeed, otherwise
by Theorem \ref{wsquasineq}\,(i) applied to $u=u_i$
with $i\in\zbb_+$ ($u_0:=\koo$), there exists $c>0$
such that $1 \Le c \alpha(i+1)^2$ for all $i \in
\zbb_+$, which is impossible. It follows from
\eqref{sumcia} and \eqref{sumcia2} that \eqref{abc3}
holds, which in view of Theorem \ref{wsquasineq}\,(ii)
implies that
   \begin{align*}
\is{E(\cdot)|\slam|f}{|\slam|f} \ll \is{E(\cdot)\slam
f}{\slam f}, \quad f \in \dz{\slam}.
   \end{align*}

Fix $c \in (1,\infty)$. Suppose now that $\inf_{n\in
\nbb} \alpha(n) = 1/\sqrt{c}$ and $q(i)^{-2} + 1 \Le
c$ for all $i \in \zbb_+$ (we still assume that
\eqref{sumcia} and \eqref{sumcia2} are satisfied). It
is easily seen that \eqref{abc2} holds. Hence, by
Theorem \ref{wsquasineq}\,(i), $\slam$ is weakly
quasinormal with $\cfr_{\slam} \Le c$. We show that
$\cfr_{\slam}=c$. Indeed, by Theorem
\ref{wsquasineq}\,(i), $1 \Le \cfr_{\slam}
\alpha(i)^2$ for all $i \in \nbb$, which implies that
$1/\sqrt{\cfr_{\slam}} \Le 1/\sqrt{c}$, and thus
$\cfr_{\slam} \Ge c$.

Finally, note that the so-constructed operator $\slam$
can be made bounded or unbounded according to our
needs, still maintaining its properties discussed
above. This can be achieved by considering bounded or
unbounded sequences $\{q(n)\}_{n=0}^\infty$.
   \end{exa}
   \begin{center}
   \includegraphics[width=8cm]
   {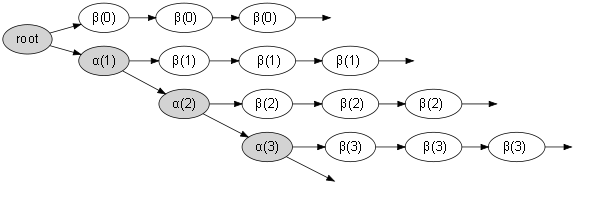} 

   {\small {\sf Figure 2}}
   \end{center}
   \vspace{1ex}
   \begin{exa} \label{nq-ineq2}
Let $\{\alpha(n)\}_{n=1}^\infty$ and
$\{\beta(n)\}_{n=0}^\infty$ be sequences of positive
real numbers that satisfy \eqref{sumcia}. The reader
can easily convince himself that the weighted shifts
$\slam$ on the directed tree $\tcal$ given by Figure
2, which are less complicated than that in Figure 1,
have all the properties specified in Example
\ref{nq-ineq} (each of which depends on the choice of
weights) except for unboundedness, namely $\slam$ are
always bounded.
   \end{exa}
   \vspace{1ex}
   \begin{center}
   \includegraphics[width=8cm]
   {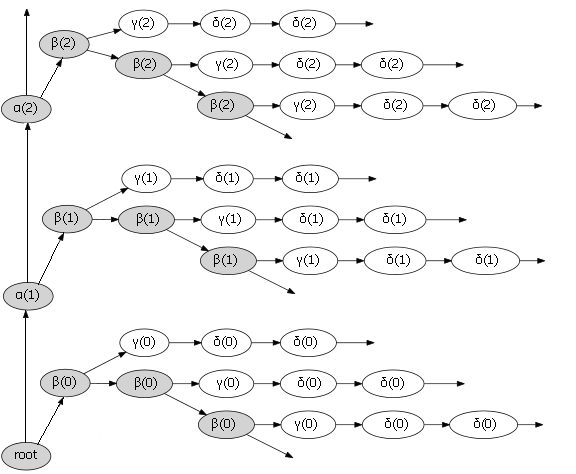} 

   \vspace{1ex} {\small {\sf Figure 3}}
   \end{center}
   \vspace{0.5ex}
   \begin{exa}  \label{nq-ineq3}
The previous two constructions can be modified so as
to obtain examples of weighted shifts $\slam$ on a
directed tree with nonzero weights which have all the
properties stated in Example \ref{nq-ineq} (each of
which depends on the choice of weights) except for
non-hyponormality, namely $\slam$ are hyponormal.
Since the main idea remains the same, we skip the
discussion of the present example. For the reader's
convenience we draw a figure that contains the
necessary data (cf.\ Figure 3). The sequences
$\{\alpha(n)\}_{n=1}^\infty$,
$\{\beta(n)\}_{n=0}^\infty$,
$\{\gamma(n)\}_{n=0}^\infty$ and
$\{\delta(n)\}_{n=0}^\infty$ consist of positive real
numbers that satisfy \eqref{sumcia} and the following
three conditions
   \begin{align}   \notag
&\delta(n)^2 = \beta(n)^2 + \gamma(n)^2, \quad n\in
\zbb_+,
   \\ \label{abgd1}
&\frac{\beta(n)^2}{\delta(n)^2} + \alpha(n+1)^2 < 1,
\quad n\in \zbb_+.
   \\ \label{abgd2}
&\delta(n)>1, \quad n \in \zbb_+.
   \end{align}
It is worth pointing out that under the assumption
\eqref{sumcia}, the conditions \eqref{abgd1} and
\eqref{abgd2} are equivalent.
   \end{exa}
   \section{Remarks and further results}
The absolute continuity approach developed in this
paper in the context of quasinormal operators can be
generalized to the case of other classes of operators.
The class of $q$-quasinormal operators, a particular
case of $q$-deformed operators introduced by \^Ota in
\cite{ota-q} (see also
\cite{ota-q-hyp,ota-fhsz,ota-fhsz-q-sub}) in
connection with the theory of quantum groups (see
\cite{kl-sch}), is well-suited for our purposes.

Let $q$ be a positive real number. Following
\cite{ota-q}, we say that a closed densely defined
operator $A$ in a complex Hilbert space $\hh$ is {\em
$q$-quasinormal} if $U|A| \subseteq \sqrt{q} \, |A|U$,
where $A = U|A|$ is the polar decomposition of $A$ (or
equivalently if and only if $U|A| = \sqrt{q} \, |A|U$;
cf.\ \cite[Lemma 2.2]{ota-q}). In view of
\cite[Theorem 2.5]{ota-q}, a closed densely defined
operator $A$ in $\hh$ is $q$-quasinormal if and only
if
   \begin{align*}
UE(\sigma)=E(\psi_q^{-1}(\sigma))U, \quad \sigma \in
\borel{\rbb_+},
   \end{align*}
where $E$ is the spectral measure of $|A|$ and
$\psi_q\colon \rbb_+\to \rbb_+$ is a Borel function
given by $\psi_q(x)=\sqrt{q} \, x$. The above suggests
the following generalization.
   \begin{pro}\label{intert}
Let $A$ be a closed densely defined operator in $\hh$,
$A=U|A|$ be its polar decomposition and $E$ be the
spectral measure of $|A|$. Suppose $\phi$ and $\psi$
are Borel functions from $\rbb_+$ to $\rbb_+$. Then
the following conditions are
equivalent\,\footnote{\;$E(\varphi^{-1}(\cdot))$
stands for the spectral measure $\borel{\rbb_+} \ni
\sigma \longmapsto E(\varphi^{-1}(\sigma))\in
\ogr{\hh}$.}{\em :}
   \begin{enumerate}
   \item[(i)]
 $UE(\varphi^{-1}(\cdot)) = E(\psi^{-1}(\cdot))U$,
   \item[(ii)] $U\varphi(|A|) \subseteq \psi(|A|) U$,
   \item[(iii)] $E(\psi^{-1}(\cdot))A \subseteq
AE(\varphi^{-1}(\cdot))$.
   \end{enumerate}
   \end{pro}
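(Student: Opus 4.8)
The plan is to treat (i)$\Leftrightarrow$(ii) by reducing it to Theorem~\ref{bir} via a direct-sum trick, and to handle (i)$\Leftrightarrow$(iii) by a hands-on computation built on the polar decomposition $A=U|A|$. Throughout I would write $F(\cdot)=E(\varphi^{-1}(\cdot))$ and $G(\cdot)=E(\psi^{-1}(\cdot))$; by the change-of-variables rule for spectral measures, $F$ and $G$ are precisely the spectral measures of the positive selfadjoint operators $\varphi(|A|)$ and $\psi(|A|)$. I would freely use the standard facts that $U$ is a partial isometry with $\jd{U}=\jd{|A|}=\jd{A}$ and initial space $\overline{\ob{|A|}}$, that $\jd{|A|}$ reduces $E$ and hence reduces $F$ and $G$, and that $E(\tau)|A|\subseteq|A|E(\tau)$ for all $\tau\in\borel{\rbb_+}$, so each $F(\sigma)$ maps $\dz{|A|}$ into itself and commutes with $|A|$ there.

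For (i)$\Leftrightarrow$(ii) I would pass to $\hh\oplus\hh$ and set
\begin{align*}
N=\left[\begin{smallmatrix}\varphi(|A|)&0\\0&\psi(|A|)\end{smallmatrix}\right],
\qquad
\widetilde U=\left[\begin{smallmatrix}0&0\\U&0\end{smallmatrix}\right].
\end{align*}
Then $N$ is selfadjoint, hence normal, its spectral measure is the block-diagonal measure $\sigma\mapsto F(\sigma)\oplus G(\sigma)$, and $\widetilde U\in\ogr{\hh\oplus\hh}$. A one-line matrix multiplication shows that $\widetilde U N\subseteq N\widetilde U$ is equivalent to $U\varphi(|A|)\subseteq\psi(|A|)U$, i.e.\ to (ii), whereas $\widetilde U$ commuting with the spectral measure of $N$ is equivalent to $UF(\sigma)=G(\sigma)U$ for all $\sigma$, i.e.\ to (i). Thus (i)$\Leftrightarrow$(ii) is exactly Theorem~\ref{bir} applied to the pair $(N,\widetilde U)$.

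For (i)$\Rightarrow$(iii) I would fix $\sigma\in\borel{\rbb_+}$ and $f\in\dz{A}=\dz{|A|}$. Since $F(\sigma)$ preserves $\dz{|A|}$ and commutes with $|A|$ there, $F(\sigma)f\in\dz{A}$, and
\begin{align*}
AF(\sigma)f=U|A|F(\sigma)f=UF(\sigma)|A|f=G(\sigma)U|A|f=G(\sigma)Af,
\end{align*}
which is (iii). Conversely, assuming (iii), the same chain read backwards yields $G(\sigma)U(|A|f)=UF(\sigma)(|A|f)$ for every $f\in\dz{|A|}$; hence the bounded operators $G(\sigma)U$ and $UF(\sigma)$ agree on $\ob{|A|}$, and therefore on $\overline{\ob{|A|}}$. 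On the complementary subspace $\jd{|A|}$ both vanish (for $h\in\jd{|A|}$ one has $Uh=0$ and $F(\sigma)h\in\jd{|A|}$), so $UF(\sigma)=G(\sigma)U$ on all of $\hh$, which is (i).

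The genuinely new ingredient is only the $2\times 2$ reduction; everything else is bookkeeping. Accordingly, the step requiring the most care will be verifying that $N$ is really a normal operator whose spectral measure is $F\oplus G$ — that is, the identification of the spectral measure of $\varphi(|A|)$ with $E\circ\varphi^{-1}$ for a possibly unbounded Borel function $\varphi$ — together with the domain bookkeeping in (i)$\Leftrightarrow$(iii), where one must keep track of the unbounded factor $|A|$ when commuting it past the bounded projections $F(\sigma)$. Once these are in place, the matrix manipulations and the continuity-and-extension argument are routine.
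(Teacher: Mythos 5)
Your proof is correct, and its skeleton matches the paper's: (i)$\Leftrightarrow$(ii) is reduced to a commutation theorem for normal operators plus the identification of $E(\varphi^{-1}(\cdot))$ as the spectral measure of $\varphi(|A|)$, and (i)$\Leftrightarrow$(iii) is done by hand from the polar decomposition. The difference lies in what gets cited versus proved. The paper's proof is two lines of citations: the ``intertwining'' version of \cite[Theorem 6.3.2]{b-s} (for normal $N_1,N_2$ and bounded $T$, $TN_1\subseteq N_2T$ iff $T$ intertwines the two spectral measures) together with the measure transport theorem \cite[Theorem 5.4.10]{b-s}, and, for (i)$\Leftrightarrow$(iii), the instruction to adapt the proof of \cite[Proposition 1]{StSz1}. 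You never invoke an intertwining theorem: you recover it from the commutant version actually stated in the paper (Theorem \ref{bir}) by the $2\times 2$ operator-matrix trick on $\hh\oplus\hh$, taking $N=\varphi(|A|)\oplus\psi(|A|)$ and placing $U$ in an off-diagonal corner, and you carry out the (i)$\Leftrightarrow$(iii) computation explicitly---which is essentially what the citation to \cite{StSz1} stands for. The paper's route buys brevity; yours buys self-containedness, since apart from the measure transport fact everything you use (Theorem \ref{bir}, $E(\tau)|A|\subseteq|A|E(\tau)$, $\jd{U}=\jd{|A|}=\jd{A}$, $\hh=\overline{\ob{|A|}}\oplus\jd{|A|}$) is already available in the paper. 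Your bookkeeping is sound at the two delicate points: $\dz{N}=\dz{\varphi(|A|)}\oplus\dz{\psi(|A|)}$ has an inert second coordinate, so the matrix relation $\widetilde{U}N\subseteq N\widetilde{U}$ unwinds exactly to (ii) with the correct domains; and in (iii)$\Rightarrow$(i) the bounded operators $UE(\varphi^{-1}(\sigma))$ and $E(\psi^{-1}(\sigma))U$ may legitimately be compared separately on $\overline{\ob{|A|}}$ and on its orthogonal complement $\jd{|A|}$, the latter being invariant for $E$ and annihilated by $U$.
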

   \begin{proof}
(i)$\Leftrightarrow$(ii) Use the measure transport
theorem (cf.\ \cite[Theorem 5.4.10]{b-s}) and the
``intertwining'' version of \cite[Theorem 6.3.2]{b-s}.

(i)$\Leftrightarrow$(iii) Adapt the proof of
\cite[Proposition 1]{StSz1}.
   \end{proof}
Below we assume that $\phi,\psi\colon \rbb_+ \to
\rbb_+$ are fixed Borel functions. Arguing exactly as
in the proofs of Theorems \ref{chq2}, \ref{chq3},
\ref{abcon} and \ref{quasiwkwkw2}, and using
Proposition \ref{intert} together with its proof, we
obtain the following more general results. It is also
worth pointing out that the ``moreover'' parts of
Theorems \ref{chq3} and \ref{abcon} can be easily
adapted to this new context as well. We leave the
details to the reader.
   \begin{thm}
Let $A$ be a closed densely defined operator in $\hh$
and $E$ be the spectral measure of $|A|$. Then the
following conditions are equivalent\/{\em :}
   \begin{enumerate}
   \item[(i)] $E(\psi^{-1}(\cdot))A \subseteq
AE(\varphi^{-1}(\cdot))$,
   \item[(ii)]
$\is{E(\psi^{-1}(\cdot))Af}{Af} =
\is{E(\varphi^{-1}(\cdot))|A|f}{|A|f}$ for all $f \in
\dz{A}$,
   \item[(iii)]
$\is{E(\psi^{-1}(\cdot))Af}{Af} \ll
\is{E(\varphi^{-1}(\cdot))|A|f}{|A|f}$ for all $f \in
\dz{A}$.
   \end{enumerate}
   \end{thm}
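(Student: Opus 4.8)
The final statement generalizes Theorem~\ref{chq2} by replacing the spectral measure $E$ with its transports $E(\psi^{-1}(\cdot))$ and $E(\varphi^{-1}(\cdot))$. The plan is to mirror the three-step cycle of Theorem~\ref{chq2} verbatim, substituting Proposition~\ref{intert} for the role that Theorem~\ref{bir} and \eqref{quas} played there. The implication (ii)$\Rightarrow$(iii) is trivial (equality forces absolute continuity), so the work lies in (i)$\Rightarrow$(ii) and (iii)$\Rightarrow$(i).

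For (i)$\Rightarrow$(ii), I would write $A=U|A|$ and invoke the equivalence (iii)$\Leftrightarrow$(i) of Proposition~\ref{intert} to pass from $E(\psi^{-1}(\cdot))A \subseteq AE(\varphi^{-1}(\cdot))$ to the intertwining relation $UE(\varphi^{-1}(\cdot)) = E(\psi^{-1}(\cdot))U$. Then, exactly as in Theorem~\ref{chq2}, use $P:=U^*U$ to compute
\begin{align*}
\is{E(\psi^{-1}(\sigma))Af}{Af} = \is{E(\psi^{-1}(\sigma))U|A|f}{U|A|f} = \is{E(\varphi^{-1}(\sigma))|A|f}{P|A|f} = \is{E(\varphi^{-1}(\sigma))|A|f}{|A|f},
\end{align*}
using $P|A|=|A|$ in the last step. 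The only care needed is tracking which measure carries $\varphi^{-1}$ and which carries $\psi^{-1}$, but the algebra is identical.

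For (iii)$\Rightarrow$(i), I would repeat the disjointification argument of Theorem~\ref{chq2} to reduce to pairwise disjoint sets, obtaining the analogues of \eqref{r6} and \eqref{r7} with $E(\psi^{-1}(\cdot))$ on the $A$ side and $E(\varphi^{-1}(\cdot))$ on the $|A|$ side. Absolute continuity then yields the implication $\sum_i E(\varphi^{-1}(\sigma_i))|A|f_i = 0 \implies \sum_i E(\psi^{-1}(\sigma_i))Af_i = 0$, which lets me define a well-defined isometry $\widetilde T_0$ on $\ob{|A|}$ sending $E(\varphi^{-1}(\sigma))|A|f$ to $E(\psi^{-1}(\sigma))Af$; the range-density replacement for \eqref{oba} follows since $\varphi^{-1}(\rbb_+)=\rbb_+$ makes $\sigma=\rbb_+$ available. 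Extending to $T\in\ogr\hh$ and reading off $A=T|A|$ and $TE(\varphi^{-1}(\sigma))|A|=E(\psi^{-1}(\sigma))A$ recovers the polar decomposition $A=T|A|$ with $T=U$, and the commutation relation $UE(\varphi^{-1}(\sigma))=E(\psi^{-1}(\sigma))U$ is exactly condition (i) of Proposition~\ref{intert}.

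The main obstacle is bookkeeping rather than conceptual: one must verify that the transported measures $\sigma\mapsto E(\varphi^{-1}(\sigma))$ are genuine spectral measures (so the orthogonality relations $E(\varphi^{-1}(\sigma^{\prime}_j))E(\varphi^{-1}(\sigma^{\prime}_k))=0$ for disjoint $\sigma^{\prime}_j,\sigma^{\prime}_k$ still hold) and that $\varphi^{-1}$ and $\psi^{-1}$ do not disturb the disjointness used to split \eqref{r6}. Since $\varphi^{-1}$ preserves disjoint unions and intersections, disjoint $\sigma^{\prime}_j$ have disjoint preimages, so the multiplicativity of the spectral measure is unaffected and every projection identity used in Theorem~\ref{chq2} transfers without change. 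The final appeal to Proposition~\ref{intert} then closes the loop, completing the proof.
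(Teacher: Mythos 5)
Your proposal is correct and coincides with the paper's own proof: the paper establishes this theorem precisely by repeating the argument of Theorem~\ref{chq2} with Proposition~\ref{intert} taking over the role played there by \eqref{quas} and Theorem~\ref{bir}. The bookkeeping points you flag (preimages under $\varphi$ and $\psi$ preserving disjointness and intersections, $\varphi^{-1}(\rbb_+)=\psi^{-1}(\rbb_+)=\rbb_+$ making the $\sigma=\rbb_+$ substitution available, and identifying the extended operator $T$ with the partial isometry $U$ of the polar decomposition) are exactly the verifications the paper leaves implicit.
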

   \begin{thm}
Let $A$ be a closed densely defined operator in $\hh$,
$E$ be the spectral measure of $|A|$ and $c\in
\rbb_+$. Then the following conditions are
equivalent\/{\em :}
   \begin{enumerate}
   \item[(i)] $\is{E(\varphi^{-1}(\cdot))|A|f}{|A|f} \Le
c \is{E(\psi^{-1}(\cdot))Af}{Af}$ for all $f \in
\dz{A}$,
   \item[(ii)] there exists $T \in \ogr{\hh}$ such that
   \begin{align*}
\text{$TA=|A|$, $\|T\| \Le \sqrt{c}$ and
$TE(\psi^{-1}(\cdot)) = E(\varphi^{-1}(\cdot))T$,}
   \end{align*}
   \item[(iii)] there exists $T \in \ogr{\hh}$ such that
   \begin{align*}
\text{$TA=|A|$, $\|T\| \Le \sqrt{c}$ and $T\psi(|A|)
\subseteq \varphi(|A|)T$.}
   \end{align*}
   \end{enumerate}
   \end{thm}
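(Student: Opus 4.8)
The plan is to follow the proof of Theorem~\ref{chq3} almost verbatim, replacing the spectral projections $E(\sigma)$ sitting next to $A$ by $E(\psi^{-1}(\sigma))$ and those sitting next to $|A|$ by $E(\varphi^{-1}(\sigma))$, and using Proposition~\ref{intert} (together with its proof) in place of Theorem~\ref{bir}. Concretely I would establish the cycle (i)$\Rightarrow$(ii), (ii)$\Leftrightarrow$(iii), (ii)$\Rightarrow$(i), where $E$ denotes the spectral measure of $|A|$ throughout.

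For (i)$\Rightarrow$(ii) the first step is to upgrade the one-vector inequality (i) to the finite-system inequality
\[
\Big\|\sum_{i=1}^n E(\varphi^{-1}(\sigma_i))|A|f_i\Big\|^2 \Le c \Big\|\sum_{i=1}^n E(\psi^{-1}(\sigma_i))Af_i\Big\|^2,
\]
valid for all $\sigma_1,\dots,\sigma_n\in\borel{\rbb_+}$ and $f_1,\dots,f_n\in\dz{A}$. This is done by the disjointification device of the implication (iii)$\Rightarrow$(i) of Theorem~\ref{chq2}: picking disjoint $\sigma'_j$ with $\sigma_i=\bigcup_{j\in J_i}\sigma'_j$ and setting $f'_j=\sum_i\chi_{J_i}(j)f_i$, one uses that the preimages $\varphi^{-1}(\sigma'_j)$ and $\psi^{-1}(\sigma'_j)$ are again pairwise disjoint, so the corresponding spectral projections have orthogonal ranges and a Pythagorean expansion reduces both sides to sums over $j$ of single-vector quantities to which (i) applies. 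With this in hand I set $\hh_0=\overline{\lin\{E(\psi^{-1}(\sigma))Af\colon \sigma\in\borel{\rbb_+},\,f\in\dz{A}\}}$ and obtain a unique bounded $T_0\colon\hh_0\to\hh$ with $\|T_0\|\Le\sqrt{c}$ and $T_0E(\psi^{-1}(\sigma))A=E(\varphi^{-1}(\sigma))|A|$; extending by $Tf=T_0Qf$ with $Q$ the orthogonal projection onto $\hh_0$ yields $T\in\ogr{\hh}$ with $\|T\|\Le\sqrt c$. Putting $\sigma=\rbb_+$ (so that $\psi^{-1}(\rbb_+)=\varphi^{-1}(\rbb_+)=\rbb_+$ and $E(\rbb_+)=I_{\hh}$) gives $TA=|A|$. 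The commutation relation comes from the double-application computation of \eqref{numerek}: using $\psi^{-1}(\sigma)\cap\psi^{-1}(\tau)=\psi^{-1}(\sigma\cap\tau)$ and $\varphi^{-1}(\sigma\cap\tau)=\varphi^{-1}(\sigma)\cap\varphi^{-1}(\tau)$ one gets $TE(\psi^{-1}(\sigma))(E(\psi^{-1}(\tau))Af)=E(\varphi^{-1}(\sigma))T(E(\psi^{-1}(\tau))Af)$, whence $TE(\psi^{-1}(\sigma))|_{\hh_0}=E(\varphi^{-1}(\sigma))T|_{\hh_0}$; since $\hh_0$ reduces $E(\psi^{-1}(\cdot))$ and $T$ vanishes on $\hh\ominus\hh_0$, this extends to $TE(\psi^{-1}(\cdot))=E(\varphi^{-1}(\cdot))T$ on all of $\hh$, which is (ii).

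For (ii)$\Leftrightarrow$(iii) I would invoke the equivalence (i)$\Leftrightarrow$(ii) of Proposition~\ref{intert}, whose proof (the measure transport theorem together with the intertwining form of \cite[Theorem 6.3.2]{b-s}) uses nothing about the partial isometry $U$ beyond its boundedness. Applied to the bounded operator $T$, with the roles of $\varphi$ and $\psi$ interchanged, it shows that $TE(\psi^{-1}(\cdot))=E(\varphi^{-1}(\cdot))T$ is equivalent to $T\psi(|A|)\subseteq\varphi(|A|)T$; the constraints $TA=|A|$ and $\|T\|\Le\sqrt c$ are common to (ii) and (iii).

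Finally, for (ii)$\Rightarrow$(i) I would argue as in \eqref{numerek1}: using $|A|f=TAf$, that $E(\varphi^{-1}(\sigma))$ is an orthogonal projection, and the commutation $E(\varphi^{-1}(\sigma))T=TE(\psi^{-1}(\sigma))$,
\[
\is{E(\varphi^{-1}(\sigma))|A|f}{|A|f}=\big\|E(\varphi^{-1}(\sigma))TAf\big\|^2=\big\|TE(\psi^{-1}(\sigma))Af\big\|^2\Le c\,\is{E(\psi^{-1}(\sigma))Af}{Af},
\]
which is (i). The main obstacle I anticipate is the bookkeeping in (i)$\Rightarrow$(ii): one must keep the two distinct substitutions $\varphi^{-1}$ and $\psi^{-1}$ correctly attached to the $|A|$-side and the $A$-side respectively, and verify the set-theoretic identities $\varphi^{-1}(\sigma\cap\tau)=\varphi^{-1}(\sigma)\cap\varphi^{-1}(\tau)$ and $\psi^{-1}(\sigma\cap\tau)=\psi^{-1}(\sigma)\cap\psi^{-1}(\tau)$ that make the projection algebra and the reducing-subspace argument go through exactly as in the case $\varphi=\psi=\mathrm{id}$ treated in Theorem~\ref{chq3}.
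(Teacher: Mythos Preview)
Your proposal is correct and follows exactly the route the paper indicates: the paper does not give an explicit proof of this theorem but directs the reader to argue ``exactly as in the proofs of Theorems \ref{chq2}, \ref{chq3}, \ref{abcon} and \ref{quasiwkwkw2}, and using Proposition \ref{intert} together with its proof,'' which is precisely what you do. Your careful tracking of the $\psi^{-1}$/$\varphi^{-1}$ substitutions, the disjointification argument, the reducing-subspace step for $\hh_0$, and the observation that the proof of Proposition \ref{intert}(i)$\Leftrightarrow$(ii) uses only the boundedness of the intertwining operator (so it applies to $T$ with the roles of $\varphi$ and $\psi$ interchanged) are all exactly the details needed.
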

   \begin{thm}
Let $A$ be a closed densely defined operator in $\hh$
and $E$ be the spectral measure of $|A|$. Then the
following conditions are equivalent\/{\em :}
   \begin{enumerate}
   \item[(i)] $\is{E(\varphi^{-1}(\cdot))|A|f}{|A|f} \ll
\is{E(\psi^{-1}(\cdot))Af}{Af}$ for all $f \in
\dz{A}$,
   \item[(ii)] there exists a $($unique$)$ linear
map $T_0 \colon \hscr_0 \to \ob{|A|}$ such that $T_0
A=|A|$ and $T_0E(\psi^{-1}(\cdot))|_{\hscr_0} =
E(\varphi^{-1}(\cdot))T_0$, where
   \begin{align*}
\hscr_0 = \lin\big\{E(\psi^{-1}(\sigma))Af \colon
\sigma \in \borel{\rbb_+ }, \, f \in \dz{A}\big\}.
   \end{align*}
   \end{enumerate}
   \end{thm}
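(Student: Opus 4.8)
The plan is to transcribe the proof of Theorem~\ref{abcon} line by line, replacing the single spectral measure $E$ by the two transported set functions $F(\cdot):=E(\psi^{-1}(\cdot))$ and $G(\cdot):=E(\varphi^{-1}(\cdot))$. The preliminary observation (cf.\ the proof of Proposition~\ref{intert} and the measure transport theorem \cite[Theorem~5.4.10]{b-s}) is that $F$ and $G$ are again $\ogr{\hh}$-valued spectral measures on $\borel{\rbb_+}$; in particular they are multiplicative, $F(\sigma)F(\tau)=F(\sigma\cap\tau)$ and $G(\sigma)G(\tau)=G(\sigma\cap\tau)$, the projections $F(\sigma'_j)$ (resp.\ $G(\sigma'_j)$) have mutually orthogonal ranges whenever the $\sigma'_j$ are pairwise disjoint, and $F(\rbb_+)=G(\rbb_+)=I_{\hh}$ because $\psi$ and $\varphi$ map $\rbb_+$ into $\rbb_+$. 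Moreover, since $E(\tau)|A|\subseteq|A|E(\tau)$ for every $\tau\in\borel{\rbb_+}$, taking $\tau=\varphi^{-1}(\sigma)$ gives $G(\sigma)|A|\subseteq|A|G(\sigma)$, whence $G(\sigma)|A|f=|A|E(\varphi^{-1}(\sigma))f\in\ob{|A|}$ for every $f\in\dz{A}$; also $Af=F(\rbb_+)Af\in\hscr_0$ and $F(\sigma)\hscr_0\subseteq\hscr_0$.

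For (i)$\Rightarrow$(ii) I would run the disjointification argument from the proof of the implication (iii)$\Rightarrow$(i) of Theorem~\ref{chq2}. Given finite systems $\sigma_1,\dots,\sigma_n\in\borel{\rbb_+}$ and $f_1,\dots,f_n\in\dz{A}$, I pass to pairwise disjoint $\sigma'_1,\dots,\sigma'_m$ with $\sigma_i=\bigcup_{j\in J_i}\sigma'_j$, set $f'_j=\sum_{i=1}^n\chi_{J_i}(j)f_i$, and, exactly as in \eqref{r6} and \eqref{r7}, use the orthogonality of the ranges of the $F(\sigma'_j)$ and of the $G(\sigma'_j)$ to get $\|\sum_i F(\sigma_i)Af_i\|^2=\sum_j\is{F(\sigma'_j)Af'_j}{Af'_j}$ and $\|\sum_i G(\sigma_i)|A|f_i\|^2=\sum_j\is{G(\sigma'_j)|A|f'_j}{|A|f'_j}$. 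Hypothesis (i), applied with $f=f'_j$ and $\sigma=\sigma'_j$, says that $\is{F(\sigma'_j)Af'_j}{Af'_j}=0$ forces $\is{G(\sigma'_j)|A|f'_j}{|A|f'_j}=0$; summing over $j$ yields the implication $\sum_i F(\sigma_i)Af_i=0\Rightarrow\sum_i G(\sigma_i)|A|f_i=0$. Hence the assignment $T_0(\sum_i F(\sigma_i)Af_i)=\sum_i G(\sigma_i)|A|f_i$ defines a linear map $\hscr_0\to\ob{|A|}$, which satisfies $T_0A=|A|$ upon taking $n=1$, $\sigma_1=\rbb_+$. The intertwining relation is then checked as in \eqref{numerek}: multiplicativity of $F$ and $G$ gives $T_0F(\sigma)(F(\tau)Af)=T_0F(\sigma\cap\tau)Af=G(\sigma\cap\tau)|A|f=G(\sigma)G(\tau)|A|f=G(\sigma)T_0(F(\tau)Af)$, that is $T_0F(\sigma)|_{\hscr_0}=G(\sigma)T_0$. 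Uniqueness is automatic, since any linear $T_0$ with $T_0A=|A|$ and this intertwining relation must obey $T_0(F(\sigma)Af)=G(\sigma)T_0(Af)=G(\sigma)|A|f$.

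For (ii)$\Rightarrow$(i) I would argue as in the corresponding part of Theorem~\ref{abcon}. Fix $f\in\dz{A}$ and $\sigma\in\borel{\rbb_+}$ with $\is{F(\sigma)Af}{Af}=0$; as $F(\sigma)$ is an orthogonal projection this gives $\|F(\sigma)Af\|^2=0$, so $F(\sigma)Af=0$. Using $|A|f=T_0(Af)$, $Af\in\hscr_0$, and the intertwining relation, $G(\sigma)|A|f=G(\sigma)T_0(Af)=T_0(F(\sigma)Af)=0$, and therefore $\is{G(\sigma)|A|f}{|A|f}=0$, which is precisely the absolute continuity asserted in (i).

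I do not anticipate a genuine obstacle: the whole content reduces to the bookkeeping facts that $F$ and $G$ are honest spectral measures and that $F(\rbb_+)=G(\rbb_+)=I_{\hh}$, both immediate once one invokes the measure transport theorem and the hypothesis $\psi,\varphi\colon\rbb_+\to\rbb_+$. After that the argument is a faithful repetition of the proof of Theorem~\ref{abcon}, the only place demanding a moment's attention being the verification that preimages of pairwise disjoint Borel sets under $\psi$ (resp.\ $\varphi$) are again pairwise disjoint, so that the orthogonality used in the displays above is legitimate.
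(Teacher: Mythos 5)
Your proposal is correct and is essentially the paper's own proof: the paper disposes of this theorem by stating that one should argue exactly as in the proof of Theorem \ref{abcon}, using Proposition \ref{intert} and its proof, with the transported spectral measures $E(\psi^{-1}(\cdot))$ and $E(\varphi^{-1}(\cdot))$ in place of $E$ --- which is precisely the substitution $F$, $G$ you carry out, including the disjointification step from Theorem \ref{chq2} and the intertwining computation from \eqref{numerek}. The bookkeeping facts you single out (that $F$ and $G$ are spectral measures with $F(\rbb_+)=G(\rbb_+)=I_{\hh}$, and that $G(\sigma)|A| \subseteq |A|G(\sigma)$) are exactly what makes the transcription legitimate, so there is nothing to add.
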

   \begin{thm}
Let $A$ be a closed densely defined operator in $\hh$
and $E$ be the spectral measure of $|A|$. Then the
following conditions are equivalent{\em :}
   \begin{enumerate}
   \item[(i)] $E(\psi^{-1}(\cdot))A \subseteq
AE(\varphi^{-1}(\cdot))$,
   \item[(ii)] $\is{E(\varphi^{-1}(\cdot))|A|f}{|A|f} \Le
\is{E(\psi^{-1}(\cdot))Af}{Af}$ for all $f \in
\dz{A}$.
   \end{enumerate}
   \end{thm}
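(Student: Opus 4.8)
The plan is to read this as the $(\varphi,\psi)$-analogue of Theorem~\ref{quasiwkwkw2}: condition (ii) is exactly the hypothesis (i) of the preceding theorem (the generalization of Theorem~\ref{chq3}) specialized to $c=1$, so the whole argument should run parallel to the proof of Theorem~\ref{quasiwkwkw2}, with the commutation relation $TE(\cdot)=E(\cdot)T$ there replaced by $TE(\psi^{-1}(\cdot))=E(\varphi^{-1}(\cdot))T$. I would first dispose of (i)$\Rightarrow$(ii): by the first of the four displayed theorems of this section (the generalization of Theorem~\ref{chq2}), condition (i) forces the \emph{equality} $\is{E(\psi^{-1}(\cdot))Af}{Af}=\is{E(\varphi^{-1}(\cdot))|A|f}{|A|f}$ for every $f\in\dz{A}$, and an equality of measures trivially yields the one-sided inequality asked for in (ii).

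For (ii)$\Rightarrow$(i) I would invoke the generalization of Theorem~\ref{chq3} with $c=1$: since (ii) is precisely its condition (i) for $c=1$, there exists $T\in\ogr{\hh}$ with $TA=|A|$, $\|T\|\Le 1$, and $TE(\psi^{-1}(\sigma))=E(\varphi^{-1}(\sigma))T$ for all $\sigma\in\borel{\rbb_+}$. The equality $TA=|A|$ together with $\|Af\|=\||A|f\|$ for $f\in\dz{A}$ (cf.\ \eqref{cos}) shows that $T$ is isometric on $\ob{A}$, hence on $\overline{\ob{A}}$; as $\|T\|\Le 1$, Lemma~\ref{izonp} gives $T^*T|_{\overline{\ob{A}}}=I_{\overline{\ob{A}}}$. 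Since $Af\in\overline{\ob{A}}$ for every $f\in\dz{A}$, this yields $A=T^*TA=T^*|A|$, exactly as in \eqref{atta}.

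The remaining step is to transfer the intertwining to $T^*$ and combine it with the commutation of $E$ and $|A|$. Taking adjoints in $TE(\psi^{-1}(\sigma))=E(\varphi^{-1}(\sigma))T$ (all factors bounded, the spectral projections self-adjoint) gives $E(\psi^{-1}(\sigma))T^*=T^*E(\varphi^{-1}(\sigma))$. Using $A=T^*|A|$ and the fact that $E(\varphi^{-1}(\sigma))$ commutes with $|A|$ (so that $E(\varphi^{-1}(\sigma))|A|\subseteq|A|E(\varphi^{-1}(\sigma))$, since $E$ is the spectral measure of $|A|$), I would then compute, for $\sigma\in\borel{\rbb_+}$,
\begin{align*}
E(\psi^{-1}(\sigma))A & = E(\psi^{-1}(\sigma))T^*|A| = T^*E(\varphi^{-1}(\sigma))|A|
\\
& \subseteq T^*|A|E(\varphi^{-1}(\sigma)) = AE(\varphi^{-1}(\sigma)),
\end{align*}
which is exactly (i). I expect the one delicate point to be the bookkeeping in this last chain: one must check that under taking adjoints the roles of $\psi^{-1}$ and $\varphi^{-1}$ are swapped correctly (so that $E(\psi^{-1})$ sits on the \emph{left} of $T^*$), and that it is $E(\varphi^{-1}(\sigma))$—not $E(\sigma)$—that commutes with $|A|$. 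Both facts are immediate once one recalls that $E$ commutes with every Borel function of itself, but they are precisely where an index slip would be easy to make.
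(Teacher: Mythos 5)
Your proposal is correct and is exactly the argument the paper intends: the paper proves this theorem by saying "argue as in the proof of Theorem \ref{quasiwkwkw2}," and your adaptation—using the generalized Theorem \ref{chq2} for (i)$\Rightarrow$(ii), and for (ii)$\Rightarrow$(i) invoking the generalized Theorem \ref{chq3} with $c=1$, Lemma \ref{izonp} to get $A=T^*|A|$, then taking adjoints in the intertwining relation and using $E(\varphi^{-1}(\sigma))|A|\subseteq|A|E(\varphi^{-1}(\sigma))$—is precisely that proof with $E(\cdot)$ replaced by $E(\psi^{-1}(\cdot))$ and $E(\varphi^{-1}(\cdot))$ in the right places. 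The only cosmetic difference is that you obtain the adjoint intertwining directly from the generalized condition (ii) rather than via Theorem \ref{bir}, which is the natural (and intended) adaptation since the generalized theorem already states the commutation at the level of spectral measures.
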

Substituting $\varphi=$ the identity function on
$\rbb_+$ and $\psi=\psi_q$ into the above theorems, we
obtain characterizations of $q$-quasinormal operators,
``$q$-variants'' of weakly quasinormal operators and
operators satisfying the ``$q$-version'' of the
condition (i) of Theorem \ref{abcon}.
   \subsection*{Acknowledgements} A substantial part of
this paper was written while the first and the third
authors visited Kyungpook National University during
the spring of 2012. They wish to thank the faculty and
the administration of this unit for their warm
hospitality.
   \bibliographystyle{amsalpha}
   
   \end{document}